\newcommand{\gp}[1]{\gamma_P\left(#1\right)}
\newcommand{\vftk}[1]{{\gamma}_P^k\left(#1\right)}
\newcommand{\gpkset}{\ddot{\gamma}_P^k\text{-set}}
\newcommand{\gpk}[1]{\ddot{\gamma}_P^k\left(#1\right)}
\newcommand{\pmusset}[2]{\operatorname{\#PMU}_{#2}\left(#1\right)}
\newcommand{\pmus}[1]{\operatorname{\#PMU}\left(#1\right)}
\newcommand{\vftother}[2]{{\gamma}_P^{#1}\left(#2\right)}
\newcommand{\gpkother}[2]{\ddot{\gamma}_P^{#1}\left(#2\right)}
\newcommand{\gpkotherset}[1]{\ddot{\gamma}_P^{#1}\text{-set}}
\newcommand{\gpkplus}[1]{\ddot{\gamma}_P^{k+1}\left(#1\right)}
\newcommand{\bigpds}[1]{\mathsf{s}\left(#1\right)}
\newcommand{\bigpdsj}[1]{\mathsf{s}_j\left(#1\right)}
\newcommand{\bigpdsjother}[2]{\mathsf{s}_{#2}\left( #1\right)}
\newtheorem{thm}{Theorem}
\newtheorem{cor}{Corollary}
\newtheorem{lem}{Lemma}
\newtheorem{prop}{Proposition}
\newtheorem{obs}{Observation}
\newtheorem{quest}{Question}
\newtheorem{defn}{Definition}
\newtheorem{ex}{Example}
\title{An Introduction to PMU-Defect-Robust Power Domination: Bounds, Bipartites, and Block Graphs}
\author{Beth Bjorkman \thanks{Air Force Research Laboratory, Wright-Patterson Air Force Base, OH} \and Esther Conrad \thanks{Iowa State University, Ames, IA.} \and Mary Flagg \thanks{University of St. Thomas, Houston, TX(flaggm@stthom.edu)}}
\date{December 12, 2023}
\begin{document}

\maketitle

\abstract{Sensors called phasor measurement units (PMUs) are used to monitor the electric power network. The power domination problem seeks to minimize the number of PMUs needed to monitor the network. We extend the power domination problem and consider the minimum number of sensors and appropriate placement to ensure monitoring when $k$ sensors are allowed to fail with multiple sensors allowed to be placed in one location. That is, what is the minimum multiset of the vertices, $S$, such that for every $F\subseteq S$ with $|F|=k$, $S\setminus F$ is a power dominating set. Such a set of PMUs is called a \emph{$k$-PMU-defect-robust power domination set}. This paper generalizes the work done by Pai, Chang and Wang in 2010 on fault-tolerant power domination, which did not allow for multiple sensors to be placed at the same vertex. We provide general bounds and determine the $k$-PMU-defect-robust power domination number of some graph families.}

\textbf{Keywords---} Power Domination, Robust, Fault-Tolerant

\maketitle

\section{Introduction}

The \textit{power domination problem} seeks to minimize the number of sensors called phasor measurement units (PMUs) used to monitor the electric power network, defined by Haynes et al. in  \cite{hhhh02}. The electrical power grid is represented as a graph in which each vertex represents an electrical node and each edge represents a transmission line joining two electrical nodes. The PMUs are placed on the vertices and observation rules are applied to both the vertices and the edges of the graph:  A PMU monitors the vertex where it is placed as well as the adjacent vertices and the edges incident to the vertex. Ohm's law and Kirchoff's law are used to solve a system of equations to monitor one unknown edge and one unknown vertex at a time. This process, as defined in \cite{hhhh02}, consisted of five rules focusing on the observation of both the edges and the vertices. In \cite{bh05}, Brueni and Heath simplified the process to two rules that focus solely on the observation of the vertices. The vertex-only observation process was shown to be equivalent to the original definition.

Pai, Chang, and Wang \cite{pcw10} generalized power domination to create \emph{fault-tolerant power domination} in 2010 to model the possibility of sensor failure. The $k$-\textit{fault-tolerant power domination problem} seeks to find the minimum number of PMUs needed to monitor a power network (and their placements) given that any $k$ of the PMUs will fail. The vertex containing the failed PMU remains in the graph, as do its edges; it is only the PMU that fails. This generalization allows for the placement of only one PMU per vertex.

We consider the related problem of the minimum number of PMUs needed to monitor a power network given that $k$ PMUs will fail \emph{but also allow for multiple PMUs to be placed at a given vertex}. We call this \emph{PMU-defect-robust power domination}, as it is not the vertices that cause a problem with monitoring the network, but the individual PMUs themselves. This models potential synchronization issues, sensor errors, or malicious interference with the sensor outputs.

To demonstrate the difference between fault-tolerant power domination and PMU-defect-robust power domination and how drastic the difference between these two parameters can be, consider the star on $16$ vertices with $k= 1$, shown in Figure \ref{fig:starmulti}. Notice that in fault-tolerant power domination, if one PMU is placed in the center of the star and this PMU fails, then all but one of the leaves must have PMUs in order to still form a power dominating set. However, with PMU-defect-robust power domination, placing two PMUs in the center is sufficient to ensure that even if one PMU fails, the power domination process will still observe all of the vertices. 

    \begin{figure}[htbp]
    \begin{center}
    \includegraphics[scale=1]{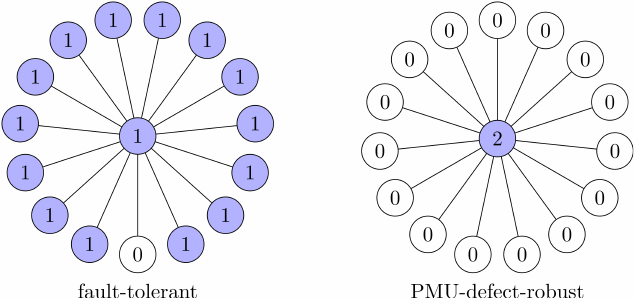}
    \end{center}
    \captionsetup{width=.9\linewidth}
    \caption{A minimum fault-tolerant power dominating set and a minimum PMU-defect-robust power dominating set shown for a star graph when $k=1$}
    \label{fig:starmulti}
    \end{figure}

This paper is organized as follows: In Section \ref{sec:prelim} we give formal definitions of basic graph theory terminology,  and of the power domination and PMU-defect-robust power domination processes. Section \ref{sec:bounds} presents basic bounds for the $k$-PMU-defect-robust power domination number and improve these bounds in the case that minimum (or almost minimum) power dominating sets overlap sufficiently. Complete bipartite graphs are covered in Section \ref{sec:completebipartite} and block graphs in Section \ref{sec:blocks}. We conclude in Section \ref{sec:futurework} with ideas for future work.

\section{Preliminaries}\label{sec:prelim}

A \emph{graph} $G=(V,E)$ is an ordered pair consisting of a finite nonempty set of \emph{vertices}, $V=V(G)$, and \emph{edges}, $E=E(G)$, consisting of  unordered pairs of distinct vertices, usually denoted as $uv$. All graphs will be simple, undirected and finite. If $uv \in E(G)$ then $u$ and $v$ are said to be \emph{adjacent} or \emph{neighbors}. The \emph{neighborhood} of a vertex $v \in V(G)$ is defined to be the set containing all of neighbors of $v$ and is denoted by $N_G(v)=\{w \in V:\{v,w\} \in E(G)\}$. The \emph{closed neighborhood} of $v$ is the set $N[v]=N(v) \cup \{v\}$. The \emph{degree} of a vertex $v$, denoted by $\deg{v}{}$, is the cardinality of $N(v)$. The \emph{minimum degree} of $G$ is defined as $\delta(G)=\min\{\deg{v}{} : v \in V(G)\}$. The \emph{maximum degree} of $G$ is defined as $\Delta(G)=\max\{\deg{v}{} : v \in V(G)\}$.

A \emph{path} between vertices $v_1$ and $v_\ell$ is a sequence of distinct vertices $v_1,v_2,\ldots,v_\ell$ such that $v_iv_{i+1}\in E(G)$ for all $1\leq i \leq \ell -1$. A \emph{cycle} is a sequence of vertices   $v_1,v_2,\ldots,v_\ell,v_1$ such that $v_iv_{i+1}\in E(G)$ for all $1\leq i \leq \ell -1$ and $v_\ell v_i \in E(G)$. A \emph{connected graph} is a graph in which there is a path between any two vertices. A \emph{tree} is a connected graph with no cycles. 

An \emph{induced subgraph} $H$ of a graph $G$ has vertex set $V(H)\subseteq V(G)$ and edge set $E(H)= \{uv : u,v\in V(H), uv\in E(G)\}$. A vertex $v$ in a connected graph $V(G)$ is a \emph{cut vertex} if the induced subgraph $H$ with $V(H)=V(G)\setminus \{v\}$ is not connected.

The \emph{power domination process} proceeds as follows. Given a set $S \subseteq V$ of initial PMU locations, define the set $PDS$ of observed vertices recursively by 
\begin{enumerate}
\item (Domination Step) $PDS = N[S]$
\item (Zero Forcing Step) While there exists $v \in PDS$ such that $v$ has exactly one neighbor $w$ not in $PDS$, $PDS=PDS \cup \{w\}$.
\end{enumerate}

If $PDS=V(G)$ at the end of this process, $S$ is called a \emph{power dominating set}. The \emph{power domination number} of $G$, denoted by $\gp{G}$, is the minimum cardinality of a power dominating set.

A \emph{minimal power dominating set} is a power dominating set $S$ in which the removal of any single vertex from $S$ results in a set that is not a power dominating set. Minimum power dominating sets are minimal, but minimal power dominating sets are not necessarily minimum.

\begin{obs}\label{PDcontainsminimal}
Every power dominating set $S$ contains a minimal power dominating set as a subset.
\end{obs}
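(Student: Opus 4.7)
The plan is to prove this by induction on the cardinality of $S$, or equivalently, by an explicit finite descent argument. The underlying idea is simple: either $S$ is already minimal, or we can remove some vertex and still have a power dominating set, and we keep shrinking until we cannot shrink further. Since $S$ is finite, this must terminate, and termination is exactly the definition of minimality.

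More concretely, first I would handle the base case: if $|S| = 1$, then $S$ is a power dominating set and $S \setminus \{v\}$ for its unique element $v$ is the empty set, which cannot be a power dominating set unless $V(G) = \emptyset$ (a case we can either exclude via the standing assumption that $V(G)$ is nonempty or handle trivially). Hence $S$ is already minimal. For the inductive step, assume the statement holds for every power dominating set of cardinality less than $n$, and let $S$ be a power dominating set with $|S| = n$. Either no vertex can be removed from $S$ while preserving the power domination property — in which case $S$ is itself a minimal power dominating set, and we are done — or there exists some $v \in S$ such that $S \setminus \{v\}$ is still a power dominating set. In the latter case, $|S \setminus \{v\}| = n-1$, so by the inductive hypothesis $S \setminus \{v\}$ contains a minimal power dominating set $S'$, and since $S' \subseteq S \setminus \{v\} \subseteq S$, we conclude that $S$ contains a minimal power dominating set.

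There is essentially no obstacle here; the only thing to be careful about is to ensure that ``minimal'' is being interpreted in the sense defined just before the observation (removing any single vertex destroys the power domination property), rather than in any inclusion-minimal sense over a restricted family. Since the two coincide here — a power dominating set in which no single-vertex deletion remains a power dominating set is precisely a minimal power dominating set — the descent argument is exactly what is needed, and no additional structural properties of the power domination process (neither the domination step nor the zero forcing step) are invoked beyond the fact that being a power dominating set is a well-defined property of a vertex subset.
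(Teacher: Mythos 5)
Your descent/induction argument is correct and is exactly the standard justification for this observation, which the paper states without proof precisely because this finite-descent reasoning is immediate (note that the empty set is never a power dominating set since $V(G)$ is assumed nonempty, so your base case is sound). Nothing further is needed.
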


In \cite{pcw10}, Pai, Chang, and Wang define the following variant of power domination. For a graph $G$ and an integer $k$ with $0\leq k \leq |V|$, a set $S\subseteq V$ is called a \emph{$k$-fault-tolerant power dominating set of $G$} if $S\setminus F$ is still a power dominating set of $G$ for any subset $F\subseteq V$ with $|F|\leq k$.  The \emph{$k$-fault-tolerant power domination number}, denoted by $\vftk{G}$, is the minimum cardinality of a $k$-fault-tolerant power dominating set of $G$. 

While $k$-fault-tolerant power domination allows us to examine what occurs when a previously chosen PMU location is no longer usable (yet the vertex remains in the graph), PMU-defect-robust power domination allows for multiple PMUs to be placed at the same location and consider if a subset of the PMUs fail. This also avoids issues with poorly connected graphs, such as in Figure \ref{fig:starmulti}. Instead of beginning with a set of vertices to be PMU locations, a \emph{multiset} of PMU locations is used.

\begin{defn}\label{def:gpk}
Let $G$ be a graph, $k \geq 0$ be an integer and let $S$ be a multiset of PMUs with underlying set a subset of $V(G)$. 
The multiset $S$ is a \emph{$k$-PMU-defect-robust power dominating set} ($k$-rPDS)  if for any submultiset $F$ with $|F|=k$, $S \setminus F$ contains a power dominating set of vertices. The minimum cardinality of a $k$-rPDS is denoted $\gpk{G}$. A $k$-rPDS of minimum cardinality is called a $\gpkset$. 
\end{defn} 

Notice that a set $S \subseteq V(G)$ on which PMUs are placed is a multiset in which the multiplicity of $v \in S$ represents the number of PMUs placed on the vertex $v$.  Denote the multiplicity of $v$ in the multiset $S$ of PMUs by $\pmusset{v}{S}$, or when $S$ is clear by  $\pmus{v}$. Given a subset $A \subseteq V$, define 
\[ \pmus A= \sum_{v \in A} \pmus v. \] 

Given a multiset $S$ of vertices of the graph $G$, the ability of $S$ to monitor the graph depends only on whether the underlying set of vertices is a power dominating set, not the multiplicity of each vertex in $S$. If $S$ is a $k$-rPDS, removing any $k$ PMUs must leave an underlying set that contains a minimal (but not necessarily minimum) power dominating set. It will be useful to translate the concept of a minimal power dominating set to a multiset without a minimal condition on multiplicity. 

\begin{defn}\label{essential}
Given a graph $G$, a multiset $M$ of vertices of $G$ is called an \emph{essential multiset} if the underlying set of $M$ is a minimal power dominating set. 
\end{defn}

There are several observations that one can quickly make.

\begin{obs}\label{obs:krPDScontainsessential}
Every $k$-rPDS of a graph $G$ must contain an essential multiset.
\end{obs}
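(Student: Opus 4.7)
My plan is to reduce the multiset statement to a statement about underlying vertex sets, apply Observation \ref{PDcontainsminimal}, and then lift back to a submultiset of $S$.

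First, I would show that the underlying set of any $k$-rPDS $S$ is itself a power dominating set of $G$. This is immediate from Definition \ref{def:gpk}: picking any submultiset $F \subseteq S$ with $|F|=k$, the multiset $S\setminus F$ contains a power dominating set, and every vertex appearing in $S\setminus F$ also lies in the underlying set of $S$. Hence the underlying set of $S$ contains (and so is) a power dominating set of $G$.

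Next, I would invoke Observation \ref{PDcontainsminimal} to extract a minimal power dominating set $T$ contained in the underlying set of $S$. For each $v \in T$ we then have $\pmus_S(v) \geq 1$, so I can form a submultiset $M \subseteq S$ by selecting, for each $v \in T$, exactly one copy of $v$ from $S$ (and no other vertices). By construction the underlying set of $M$ equals $T$, a minimal power dominating set, so $M$ is essential in the sense of Definition \ref{essential}. Since $M \subseteq S$ as multisets, this exhibits the required essential submultiset of $S$.

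There is no real obstacle; the only subtlety worth explicit care is the multiset bookkeeping, namely that the defect-tolerance hypothesis is used only to ensure that the underlying set of $S$ is a power dominating set, after which the conclusion reduces to Observation \ref{PDcontainsminimal} together with a trivial lifting of a subset of the underlying set to a submultiset of $S$.
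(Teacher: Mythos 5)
Your argument is correct and matches the paper's own (implicit) reasoning: the paper states this as an observation without a formal proof, but the surrounding text makes exactly your point, namely that removing any $k$ PMUs leaves an underlying set containing a power dominating set, so the underlying set of $S$ contains a minimal power dominating set by Observation \ref{PDcontainsminimal}, and taking one copy of each of its vertices yields the essential submultiset. Your explicit handling of the lift from the underlying set back to a submultiset of $S$ is a fine (and slightly more careful) write-up of the same idea.
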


\begin{obs}\label{obs:comparing}
    Let $G$ be a graph and $k\geq 0$. Then
    \begin{enumerate}
        \item $\gpkother{0}{G} = \vftother{0}{G}=\gp{G}$,
        \item $\gpk{G} \leq \vftk{G}$,
        \item $\gp{G}=1$ if and only if $\gpk{G} = k+1$.
    \end{enumerate}     
\end{obs}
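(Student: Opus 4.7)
The plan is to verify each of the three parts by unpacking the definitions, with a small amount of size-counting needed for part (3).

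For part (1), both definitions quantify over $F$ of size $0$, so the only candidate is $F=\emptyset$ in each case. Thus the requirement $S\setminus F$ is a power dominating set reduces to $S$ itself being a power dominating set, so $\gpkother{0}{G}=\vftother{0}{G}=\gp{G}$.

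For part (2), I would take a $k$-fault-tolerant power dominating set $S\subseteq V(G)$ and regard it as a multiset in which every vertex has multiplicity $1$. Any submultiset $F$ of this multiset with $|F|=k$ is then an ordinary subset $F\subseteq V$ with $|F|=k\leq k$, and the fault-tolerant hypothesis guarantees that $S\setminus F$ is a power dominating set. Hence $S$ is a $k$-rPDS, and minimizing over all such $S$ yields $\gpk{G}\leq \vftk{G}$. The only subtlety here is matching the $|F|\leq k$ condition of fault-tolerance with the $|F|=k$ condition of robustness, which is resolved because submultisets of size $k$ coincide with subsets of size $k$ when every multiplicity is $1$.

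Part (3) has two directions. For the forward direction, suppose $\gp{G}=1$ and pick $v$ with $\{v\}$ a power dominating set. The multiset consisting of $k+1$ copies of $v$ is a $k$-rPDS, because removing any $k$ PMUs leaves at least one copy of $v$, giving underlying set $\{v\}$; this yields $\gpk{G}\leq k+1$. For the matching lower bound, note that for any $k$-rPDS $S$, removing any $k$ PMUs leaves a multiset of size $|S|-k$ whose underlying set must contain a power dominating set, so $|S|-k\geq \gp{G}\geq 1$ and therefore $\gpk{G}\geq k+1$. Combining, $\gpk{G}=k+1$. For the converse, suppose $\gpk{G}=k+1$ and fix a $\gpkset$ $S$ of size $k+1$. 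Any submultiset $F$ with $|F|=k$ leaves $|S\setminus F|=1$, so the underlying set is a singleton, which by Observation~\ref{obs:krPDScontainsessential} must contain (hence equal) a minimal power dominating set. This forces $\gp{G}\leq 1$, and since $G$ is nonempty we conclude $\gp{G}=1$.

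The hardest step is really nothing more than the counting argument in the lower bound of (3), where one must observe that the underlying set of $S\setminus F$ has size at most $|S|-k$; the rest is careful unpacking of Definition~\ref{def:gpk}.
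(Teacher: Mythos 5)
Your proof is correct, and since the paper states this as an Observation with no written proof, your careful unpacking of the definitions (including the counting argument for the lower bound in part (3)) is exactly the argument the authors leave implicit. No gaps; the only definitional subtlety --- matching the $|F|\leq k$ condition of fault-tolerance with the $|F|=k$ condition of robustness in part (2) --- is one you identify and resolve correctly.
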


\begin{obs}\label{obs:supersetkrobust}
For any graph $G$, if $S$ is a $k$-rPDS of $G$, then any multiset $M$ of vertices of $G$ such that $S \subseteq M$ is also a $k$-rPDS. To see this, note that removing $k$ vertices from $M$ removes at most $k$ vertices from $S$. 
\end{obs}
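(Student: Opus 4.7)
The plan is to fix an arbitrary submultiset $F$ of $M$ with $|F|=k$ and show that the underlying set of $M\setminus F$ contains a power dominating set; by Definition \ref{def:gpk} this is exactly what it means for $M$ to be a $k$-rPDS. The key idea is to project $F$ onto $S$ and then invoke the robustness hypothesis on $S$.

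First I would define a submultiset $F_S$ of $S$ by setting $\pmusset{v}{F_S}=\min(\pmusset{v}{S},\pmusset{v}{F})$ for every $v\in V(G)$. By construction $F_S$ is a submultiset of both $S$ and $F$, and $|F_S|\leq|F|=k$. Since $S$ is itself a $k$-rPDS we have $|S|\geq k$, so I can extend $F_S$ to a submultiset $F'$ of $S$ with $|F'|=k$. Applying the $k$-rPDS property of $S$ to $F'$ then yields that $S\setminus F'$ contains a power dominating set.

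Next I would verify that, as multisets, $S\setminus F'\subseteq M\setminus F$. This follows from $S\subseteq M$ together with the definition of $F_S$ by a brief per-vertex check (splitting on whether $\pmusset{v}{S}\leq\pmusset{v}{F}$). Consequently the underlying set of $S\setminus F'$ is contained in the underlying set of $M\setminus F$, and since any superset of a power dominating set is itself a power dominating set, the underlying set of $M\setminus F$ contains a power dominating set.

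There is no serious obstacle here; the only care needed is in the multiset bookkeeping, since the elements of $F$ need not be drawn from $S$ and therefore cannot be removed from $S$ directly. The projection $F\mapsto F_S$ followed by the extension to $F'$ is precisely what bridges that gap and lets the $k$-rPDS hypothesis on $S$ carry over to $M$.
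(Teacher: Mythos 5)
Your proof is correct and takes essentially the same approach as the paper, whose entire justification is the single sentence that removing $k$ PMUs from $M$ removes at most $k$ PMUs from $S$; your projection $F\mapsto F_S$ with $\pmusset{v}{F_S}=\min(\pmusset{v}{S},\pmusset{v}{F})$ is just a careful formalization of that sentence. The extra step of padding $F_S$ to a submultiset $F'$ of size exactly $k$ is a fine (and slightly more scrupulous) way to match Definition \ref{def:gpk}'s requirement that $|F|=k$.
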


\begin{obs}\label{removeone}
If $S$ is a $(k+1)$-rPDS, and $v \in V(S)$, then $S\setminus\{v\}$ is a $k$-rPDS since removing $k$ PMUs from $S\setminus\{v\}$ is also removing $k+1$ PMUs from $S$.
\end{obs}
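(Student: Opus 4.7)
The plan is to verify Definition \ref{def:gpk} for $S \setminus \{v\}$ at defect level $k$ directly from the hypothesis. Fix an arbitrary submultiset $F'$ of $S \setminus \{v\}$ with $|F'| = k$ and set $F = F' \cup \{v\}$. Since $v \in V(S)$, the multiplicity of $v$ in $S$ is at least $1$, so $F$ is a genuine submultiset of $S$ with $|F| = k+1$. Because $S$ is a $(k+1)$-rPDS, $S \setminus F$ contains a power dominating set, and the multiset identity $S \setminus F = (S \setminus \{v\}) \setminus F'$ then gives that $(S \setminus \{v\}) \setminus F'$ contains a power dominating set. Since $F'$ was arbitrary, $S \setminus \{v\}$ is a $k$-rPDS.

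The only thing to watch is the multiset bookkeeping: one must note that $F'$ is a submultiset of $S \setminus \{v\}$ (not of $S$), so adjoining one copy of $v$ back still yields a submultiset of $S$ whose complement in $S$ is precisely what remains of $S \setminus \{v\}$ after deleting $F'$. Beyond this, no real obstacle arises; the content of the observation is simply that any $k$-element fault in $S \setminus \{v\}$ can be viewed as part of a $(k+1)$-element fault in $S$, exactly as indicated in the hint embedded in the statement.
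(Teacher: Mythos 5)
Your argument is correct and is exactly the paper's justification (both the clause embedded in the observation itself and the essentially identical argument used to prove Proposition~\ref{prop:incr}): any $k$-element fault $F'$ in $S\setminus\{v\}$ is augmented by $v$ to a $(k+1)$-element fault $F$ in $S$ with $(S\setminus\{v\})\setminus F' = S\setminus F$. The multiset bookkeeping you flag is handled the same way in the paper, so there is nothing to add.
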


\section{Bounds}\label{sec:bounds}

\subsection{General Bounds}
    
    A useful property of PMU-defect-robust power domination is the subadditivity of the parameter with respect to $k$. This idea is established in the next three statements. 
    
    \begin{prop}\label{prop:incr}
    Let $k\geq 0$. For any graph $G$, $\gpk{G} +1 \leq \gpkplus{G}$.
    \end{prop}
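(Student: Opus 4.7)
The claim is that each additional fault-tolerance level costs at least one more PMU. The plan is to give a direct proof by taking a minimum $(k+1)$-rPDS and exhibiting a $k$-rPDS one element smaller.

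First I would fix a $\gpkotherset{k+1}$ of minimum cardinality, call it $S$, so $|S|=\gpkplus{G}$. Since $S$ is a $(k+1)$-rPDS of a graph, its underlying set is nonempty (indeed $|S|\geq k+2$, as removing $k+1$ PMUs must still leave at least a power dominating set, whose size is at least $1$). Thus I can choose some vertex $v$ in the underlying set of $S$.

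Next, I would invoke Observation \ref{removeone} verbatim: removing the single copy $v$ from $S$ yields a multiset $S\setminus\{v\}$ that is a $k$-rPDS, because any submultiset $F\subseteq S\setminus\{v\}$ with $|F|=k$ can be enlarged by $\{v\}$ to a submultiset of $S$ of size $k+1$, and $S$ being a $(k+1)$-rPDS guarantees that $S\setminus(F\cup\{v\})=(S\setminus\{v\})\setminus F$ contains a power dominating set.

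Hence $S\setminus\{v\}$ is a $k$-rPDS of cardinality $|S|-1$, so by the minimality definition of $\gpk{G}$ we conclude $\gpk{G}\leq |S|-1=\gpkplus{G}-1$, which rearranges to the desired inequality. There is no real obstacle here; the proposition is essentially a one-line consequence of Observation \ref{removeone}, and the only thing to check carefully is that the underlying set of a $(k+1)$-rPDS is indeed nonempty so that a vertex $v$ is available to remove.
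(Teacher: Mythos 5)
Your proposal is correct and follows essentially the same route as the paper: the paper's proof also takes a minimum $(k+1)$-rPDS $S$, deletes one PMU at a vertex $v$, and checks that any $F'\subseteq S\setminus\{v\}$ with $|F'|=k$ extends to $F'\cup\{v\}\subseteq S$ of size $k+1$, so that $(S\setminus\{v\})\setminus F'$ still contains a power dominating set. Your extra remark that $|S|\geq k+2$ guarantees a vertex $v$ is available is a sensible (if minor) point of care that the paper leaves implicit.
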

    
    \begin{proof}
    Consider a $\gpkotherset{k+1}$, $S$, of $G$. Let $v\in S$. Create $S'=S\setminus\{v\}$, that is, $S'$ is $S$ with one fewer PMU at $v$. Observe that for any $F'\subseteq S'$ with $|F'|=k$, we have $F'\cup \{v\} \subseteq S$ and $|F'\cup \{v\}|=k+1$. Hence $S\setminus \left( F'\cup \{v\} \right)$ is a power dominating set of $G$. Thus, for any such $F'$, we have $\left( S\setminus \{v\} \right)\setminus F' = S'\setminus F'$ is a power dominating set of $G$. Therefore, $S'$ is a $k$-rPDS of $G$ of size $|S|-1$. 
    \end{proof}
    
    Proposition \ref{prop:incr} can be applied repeatedly to obtain the next result.
    \begin{cor}\label{cor:incrj}
    Let $k\geq 0$ and $j\geq 1$. For any graph $G$, \[\gpk{G} + j \leq \gpkother{k+j}{G}.\]
    \end{cor}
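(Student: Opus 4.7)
The plan is a direct induction on $j$, since Proposition \ref{prop:incr} already supplies the increment-by-one step. There is really no obstacle here; the task is just to iterate.

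For the base case $j = 1$, the inequality $\gpk{G} + 1 \leq \gpkother{k+1}{G}$ is exactly the content of Proposition \ref{prop:incr}, so nothing needs to be shown.

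For the inductive step, I would assume $\gpk{G} + j \leq \gpkother{k+j}{G}$ for some $j \geq 1$, and then apply Proposition \ref{prop:incr} with the role of $k$ played by $k + j$ (which is $\geq 0$, so the hypothesis of the proposition is satisfied). That application gives $\gpkother{k+j}{G} + 1 \leq \gpkother{k+j+1}{G}$. Chaining this with the inductive hypothesis yields
\[
\gpk{G} + (j+1) \;=\; \bigl(\gpk{G} + j\bigr) + 1 \;\leq\; \gpkother{k+j}{G} + 1 \;\leq\; \gpkother{k+j+1}{G},
\]
which closes the induction.

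The only subtlety worth flagging is that Proposition \ref{prop:incr} is stated for an arbitrary graph and an arbitrary nonnegative integer, so reapplying it with index $k+j$ in place of $k$ is legitimate without any extra hypothesis. Consequently the corollary follows with no further work.
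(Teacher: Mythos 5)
Your proof is correct and matches the paper's approach: the paper simply states that Proposition \ref{prop:incr} ``can be applied repeatedly,'' and your induction on $j$ is precisely the formalization of that repeated application. No issues.
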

    Corollary \ref{cor:incrj} implies the lower bound in the next proposition. The upper bound follows from taking $k+1$ copies of any minimum power dominating set for $G$ to form a $k$-rPDS.
    \begin{prop}\label{basebounds} Let $k\geq 0$. For any graph $G$,
    \[\gp{G}+k \leq \gpk{G} \leq (k+1)\gp{G}.\]
    \end{prop}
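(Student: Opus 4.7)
The plan is to verify both inequalities separately, each by invoking the infrastructure already built up in the section.

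For the lower bound $\gp{G}+k \leq \gpk{G}$, I would simply specialize Corollary \ref{cor:incrj}. Setting the ``$k$'' of the corollary to $0$ and the ``$j$'' of the corollary to $k$ yields $\gpkother{0}{G} + k \leq \gpkother{k}{G}$, and then Observation \ref{obs:comparing}(1) identifies $\gpkother{0}{G}$ with $\gp{G}$. That is the entire argument; no further work is needed.

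For the upper bound $\gpk{G} \leq (k+1)\gp{G}$, the strategy is to exhibit an explicit $k$-rPDS of the advertised size. Let $D \subseteq V(G)$ be a minimum power dominating set, so $|D|=\gp{G}$, and let $S$ be the multiset in which every vertex of $D$ has multiplicity $k+1$, so $|S|=(k+1)\gp{G}$. To show $S$ is a $k$-rPDS I would take any submultiset $F\subseteq S$ with $|F|=k$ and argue that the underlying set of $S\setminus F$ still contains $D$ (hence is a power dominating set). The key observation is that since $|F|=k$ and every $v\in D$ appears with multiplicity $k+1$ in $S$, the removal of $F$ can delete at most $k$ copies of $v$ from $S$, so at least one copy of $v$ survives. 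This holds for every $v\in D$ simultaneously, giving $D\subseteq V(S\setminus F)$, and then $S\setminus F$ contains a power dominating set as required.

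Neither step seems to present a real obstacle: the lower bound is a direct reindexing of the corollary, and the upper bound is a multiset version of the standard ``take enough copies'' construction, with the key counting point being that removing $k$ PMUs from a pile of $k+1$ identical ones always leaves at least one behind. If anything, the only subtlety worth stating carefully in the write-up is that $F$ is a submultiset (so multiplicities in $F$ count toward $|F|=k$), which is exactly what ensures the per-vertex survival argument works.
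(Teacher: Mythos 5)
Your proposal is correct and matches the paper's own argument exactly: the paper likewise derives the lower bound by applying Corollary \ref{cor:incrj} to the base case $k=0$ and obtains the upper bound by placing $k+1$ copies of a minimum power dominating set. The only (harmless) technicality is that Corollary \ref{cor:incrj} requires $j\geq 1$, so the $k=0$ case of the lower bound should be noted as the trivial identity $\gp{G}\leq\gp{G}$.
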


In a similar vein, we can add a power dominating set to a $k$-rPDS in order to create a $(k+1)$-rPDS.
    
    \begin{lem}\label{plusgammap}
For a graph $G$ and a positive integer $k$, $\gpkplus{G} \leq \gpk{G}+\gp{G}.$
\end{lem}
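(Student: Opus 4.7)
The plan is to exhibit a $(k+1)$-rPDS of size $\gpk{G} + \gp{G}$. Let $S$ be a $\gpkset$ and $P$ a minimum power dominating set of $G$, and form the multiset $T$ by adjoining one copy of each vertex of $P$ to $S$; then $|T| = \gpk{G} + \gp{G}$. To obtain the desired bound, I need to verify that for every submultiset $F \subseteq T$ with $|F| = k+1$, the underlying set of $T \setminus F$ is a power dominating set.

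Fix such an $F$, and for each vertex $v$ write $s_v$, $p_v$, and $f_v$ for the multiplicities of $v$ in $S$, $P$, and $F$, respectively, so that $p_v \in \{0,1\}$ and $f_v \leq s_v + p_v$. I would split into two cases depending on whether $F$ sits inside $S$ as a multiset. If $f_v \leq s_v$ for every $v$, then $T \setminus F$ still contains every vertex of $P$ in its underlying set, and we are done since $P$ is a power dominating set. Otherwise, there is some $v^*$ with $f_{v^*} = s_{v^*} + 1$ (forcing $p_{v^*} = 1$). In this case, define $F'$ to be the submultiset of $S$ with multiplicities $f'_v = \min(f_v, s_v)$; then $|F'| \leq k$. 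Using Proposition~\ref{basebounds} to see that $|S| \geq \gp{G} + k \geq k+1$, extend $F'$ to a submultiset $F'' \subseteq S$ of size exactly $k$. Since $S$ is a $k$-rPDS, the underlying set of $S \setminus F''$ is a power dominating set, so it suffices to show this underlying set is contained in the underlying set of $T \setminus F$. For any $v$ with $s_v > f''_v$, one has $f'_v \leq f''_v < s_v$, which forces $f'_v = f_v$ by definition of $f'_v$, and therefore $s_v + p_v - f_v \geq s_v - f_v > 0$, as needed.

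The main obstacle is the multiset bookkeeping in the second case. Removing a PMU at some $v \in P$ with $f_v > s_v$ intuitively corresponds to ``spending a $P$-copy'' at $v$, and translating this back into a $k$-element defect of $S$ alone without losing track of what survives in the underlying set is the crux. Taking the minimum of $f_v$ and $s_v$ at each vertex gives the right translation, after which the containment of underlying sets reduces to a short coordinate-by-coordinate check.
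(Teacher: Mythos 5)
Your proof is correct and takes the same approach as the paper: both form the multiset union of a $\gpkset$ with one copy of a minimum power dominating set, then split on whether all $k+1$ removed PMUs can be charged to the $k$-rPDS or at least one must be charged to the added power dominating set. Your version simply makes the multiset bookkeeping (the $\min(f_v,s_v)$ attribution and the underlying-set containment) explicit where the paper leaves it informal.
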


\begin{proof}
For the given graph $G$, let $S$ be a $\gpkset$ and let $D$ be a set of PMUs consisting of one PMU on each vertex of a minimum power dominating set of vertices. Let $C = S \cup D$, and note that $S$ and $D$ need not be disjoint since $C$ is a multiset of PMUs. Removing $k+1$ PMUs from the multiset $C$ either removes all $k+1$ PMUs from $S$, leaving one PMU on each vertex in $D$, or removes at most $k$ PMUs from $S$ and at least one PMU from $D$, which leaves a power dominating set since $S$ is a $\gpkset$. 
\end{proof}
    
We note that for any graph $G$ with $\gp{G} =1$, Observation \ref{obs:comparing}, Proposition \ref{basebounds}, and Lemma \ref{plusgammap} each demonstrate that $\gpk{G} = k+1.$  
    
Haynes et al. observed in \cite[Observation~4]{hhhh02} that in a graph with maximum degree at least 3, a minimum power dominating set can be chosen in which each vertex has degree at least $3$. We observe that this is the same for PMU-defect-robust power domination. 

    \begin{obs}\label{obs:deg3}
    Let $k\geq 0$. If $G$ is a connected graph with $\Delta(G)\geq 3$, then $G$ contains a $\gpkset$ in which every vertex has degree at least 3.
    \end{obs}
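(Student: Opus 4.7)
The plan is to lift Haynes et al.'s Observation~4 from the single-set setting to the multiset setting: starting from any $\gpkset$ $S$, I will relocate every PMU placed on a vertex of degree at most $2$ to a vertex of degree at least $3$, without changing $|S|$ or destroying the $k$-rPDS property. Iterating yields the claimed $\gpkset$. Fix $v$ with $\pmusset{v}{S}>0$ and $\deg(v)\leq 2$. Following Haynes et al., since $\Delta(G)\geq 3$ and $G$ is connected, walking from $v$ along a maximal path of degree-$\leq 2$ vertices ends at some $u$ with $\deg(u)\geq 3$; their argument guarantees that for any power dominating set $P$ with $v\in P$, the set $(P\setminus\{v\})\cup\{u\}$ is again a power dominating set. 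Form $S'$ from $S$ by moving all $\pmusset{v}{S}$ copies at $v$ to $u$, leaving other multiplicities fixed, so $|S'|=|S|$ and $\pmusset{v}{S'}=0$.

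To prove $S'$ is a $k$-rPDS, take any submultiset $F'\subseteq S'$ with $|F'|=k$ and build a matched submultiset $F\subseteq S$ with $|F|=k$ by keeping multiplicities equal outside $\{u,v\}$ and splitting the copies of $u$ in $F'$ back between $u$ and $v$, draining $v$ first: $\pmusset{v}{F}=\min\bigl(\pmusset{u}{F'},\pmusset{v}{S}\bigr)$ and $\pmusset{u}{F}=\pmusset{u}{F'}-\pmusset{v}{F}$. Using $\pmusset{u}{F'}\leq\pmusset{u}{S'}=\pmusset{u}{S}+\pmusset{v}{S}$ one checks that this $F$ is a legal submultiset of $S$. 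Because $S$ is a $k$-rPDS, the underlying set of $S\setminus F$ contains a power dominating set $P$. Let $P'=(P\setminus\{v\})\cup\{u\}$ if $v\in P$ and $P'=P$ otherwise; then $P'$ is a power dominating set by the Haynes et al.\ replacement. For each $w\in P'\setminus\{u\}$, the multiplicity of $w$ agrees in $S\setminus F$ and in $S'\setminus F'$, so $w$ lies in the underlying set of $S'\setminus F'$. For $u$, the identity
\[\pmusset{u}{S'}-\pmusset{u}{F'}=\bigl(\pmusset{u}{S}-\pmusset{u}{F}\bigr)+\bigl(\pmusset{v}{S}-\pmusset{v}{F}\bigr)\]
is strictly positive whenever $u$ or $v$ lies in the underlying set of $S\setminus F$, which is precisely the situation in which $u\in P'$. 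Hence $P'$ is contained in the underlying set of $S'\setminus F'$, so $S'$ is a $k$-rPDS.

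Iterating this relocation strictly decreases the quantity $\sum_{\deg(v)\leq 2}\pmusset{v}{S}$ at each step and therefore terminates at a $\gpkset$ whose support consists entirely of degree-$\geq 3$ vertices. The main obstacle is the multiset bookkeeping needed to match each $F'\subseteq S'$ with an appropriate $F\subseteq S$ so that a power dominating set inside the underlying set of $S\setminus F$ transfers, via the $v\mapsto u$ swap, to a power dominating set inside the underlying set of $S'\setminus F'$; the degree-$\leq 2$ walk and the replacement $v\mapsto u$ themselves are exactly the classical Haynes et al.\ construction and require no new ideas.
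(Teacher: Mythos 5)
Your proof is correct and follows exactly the argument the paper intends: the paper states this as an observation by direct analogy with Haynes et al.'s Observation~4 and gives no proof, and your relocation of all PMUs from a degree-at-most-2 vertex to the nearest degree-at-least-3 vertex, together with the explicit matching of each $F'\subseteq S'$ to an $F\subseteq S$, is just the careful multiset version of that same idea. The bookkeeping identity for $u$ and the termination argument are both sound, so this fills in the details the paper leaves implicit.
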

    
    A \emph{terminal path} from a vertex $v$ in $G$ is a path from $v$ to a vertex $u$ such that $\deg{u}{}=1$ and every internal vertex on the path has degree 2. A \emph{terminal cycle} from a vertex $v$ in $G$ is a cycle $v,u_1,u_2,\ldots,u_\ell,v$ in which $\deg{u_i}{G}=2$ for $i=1,\ldots, \ell$. 
    
    \begin{prop}\label{prop:twotermpathsortermcycle}
    Let $k\geq 0$ and let $G$ be a connected graph with $\Delta(G)\geq 3$. Let $S$ be a $\gpkset$ in which every vertex has degree at least 3. Any vertex $v\in S$ that has at least two terminal paths from $v$ must have $\pmus{v}=k+1$. Any vertex $v\in S$ that has at least one terminal cycle must have $\pmus{v}=k+1$.
    \end{prop}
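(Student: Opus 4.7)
The plan is to split the statement into a lower bound $\pmus{v} \geq k+1$ (the substantive claim) and an upper bound $\pmus{v} \leq k+1$ (which will follow from minimality of $S$). The two cases --- two terminal paths versus a terminal cycle --- are handled by the same obstruction, namely that $v$ has two branches attached to it whose interiors lie outside $S$ and which can only be observed through $v$.

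For the lower bound, I would argue by contradiction: suppose $\pmus{v} \leq k$. By Proposition~\ref{basebounds}, $|S| = \gpk{G} \geq \gp{G}+k \geq k+1 > k$, so I can choose a submultiset $F \subseteq S$ with $|F|=k$ that contains every PMU at $v$, padding with other PMUs of $S$ as needed. Then $v$ is not in the underlying set of $S \setminus F$. Since every vertex of $S$ has degree at least $3$, while internal vertices of a terminal path or terminal cycle have degree $2$ and terminal-path endpoints have degree $1$, none of the vertices lying on the two terminal paths from $v$ (or on the terminal cycle at $v$) belong to $S$, and therefore none belong to $S \setminus F$. Running the power domination process from the underlying set of $S \setminus F$, the Domination Step observes none of these branch vertices, since each of them has only low-degree neighbors together with (at most) $v$, and $v$ has been excluded. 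The key obstruction appears in the Zero Forcing Step: each branch can only be entered through $v$, because forcing inward from the far endpoint of a terminal path (or around a terminal cycle) requires its predecessor to be already observed, and that recursion traces back to $v$. But $v$ has at least two unobserved neighbors along these branches (the first vertex of each terminal path, or $u_1$ and $u_\ell$ on the terminal cycle), so $v$ never has a unique unobserved neighbor and cannot force. Hence the process stalls with vertices unobserved, contradicting the fact that $S \setminus F$ must contain a power dominating set.

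For the upper bound, I would appeal to the minimality of $S$. Suppose $\pmus{v} \geq k+2$, and set $S' = S \setminus \{v\}$. For any submultiset $F \subseteq S'$ with $|F|=k$, the multiplicity of $v$ in $S \setminus F$ is at least $2$ and the multiplicity of $v$ in $S' \setminus F$ is at least $1$, so $v$ lies in both underlying sets; all other multiplicities agree, so the two underlying sets coincide. Since $S$ is a $k$-rPDS, the underlying set of $S \setminus F$ is a power dominating set, and hence so is the underlying set of $S' \setminus F$. Thus $S'$ is a $k$-rPDS of strictly smaller cardinality than $S$, contradicting that $S$ is a $\gpkset$.

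The main obstacle is the careful verification in the Zero Forcing Step that the branches attached to $v$ are truly isolated from the rest of the graph once $v$'s PMUs are removed. The argument hinges crucially on the degree-$\geq 3$ assumption for $S$, which forces every non-$v$ vertex of a terminal path or terminal cycle to lie outside $S$; this is what makes $v$ the unique entry point and pins down the two-unobserved-neighbors failure mode uniformly across both the two-terminal-path and terminal-cycle cases.
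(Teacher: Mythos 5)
Your proof is correct and takes essentially the same approach as the paper's: the degree-at-least-3 condition forces every vertex of the terminal paths or terminal cycle out of $S$, so the two branch-neighbors of $v$ can only be observed by the domination step at $v$, which requires a PMU at $v$ to survive every removal of $k$ PMUs. You are more explicit than the paper in two places---working out the zero-forcing obstruction in detail and proving the upper bound $\pmus{v}\leq k+1$ from minimality of $S$, which the paper leaves implicit---but the underlying argument is the same.
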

    \begin{proof}
        Let $v$ be a vertex in $S$ and suppose that $v$ has two terminal paths or a terminal cycle. All of the vertices in the terminal paths or terminal cycle have degree 1 or 2 and so are not in $S$. Thus, there are at least two neighbors of $v$ which can only be observed via $v$. As $v$ can only observe both of these neighbors via the domination step, it must be the case that $\pmus{v} = k+1$.
    \end{proof}
    
    Zhao, Kang, and Chang \cite{zkc06} defined the family of graphs $\mathcal{T}$ to be those graphs obtained by taking a connected graph $H$ and for each vertex $v\in V(H)$ adding two vertices, $v'$ and $v''$; and two edges $vv'$ and $vv''$, with the edge $v'v''$ optional. The complete bipartite graph $K_{3,3}$ is the graph with vertex set $X\cup Y$ with $|X|=|Y|=3$ and edge set $E=\{xy:x\in X, y\in y\}$.
    
    \begin{thm}{\rm \cite[Theorem~3.]{zkc06}}\label{thm:powdomnover3}
    If $G$ is a connected graph on $n\geq 3$ vertices then $\gp{G}\leq\frac{n}{3}$  with equality if and only if $G\in \mathcal{T}\cup \{K_{3,3}\}$.
    \end{thm}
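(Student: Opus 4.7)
The plan is to prove the upper bound by a charging argument, then squeeze the equality characterization out of the slack in that argument. First I would handle the easy case $\Delta(G) \leq 2$ separately: here $G$ is a path or a cycle, so $\gp{G} = 1 \leq n/3$ whenever $n \geq 3$, and equality forces $n = 3$. Both $P_3$ and $C_3$ lie in $\mathcal{T}$ (take $H = K_1$, with the optional edge $v'v''$ included for $C_3$), so this disposes of the low-degree case.

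For the main case $\Delta(G) \geq 3$, I would invoke Observation \ref{obs:deg3} at $k = 0$ to fix a minimum power dominating set $S$ in which every vertex has degree at least $3$. The goal is to exhibit a map $\phi\colon V(G) \to S$ with $|\phi^{-1}(v)| \geq 3$ for every $v \in S$, which yields $n \geq 3\gp{G}$. To build $\phi$, I would send each $v \in S$ to itself and then trace the power domination process from $S$: the initial domination step gives each $v \in S$ ownership of $N[v]$ (whose cardinality is at least $4$), and each subsequent zero-forcing step assigns the newly forced vertex to the unique PDS-ancestor responsible for it. The main subtlety is that a vertex may lie in $N[u] \cap N[v]$ for distinct $u, v \in S$; these conflicts are resolved by an arbitrary tie-break, and the degree-at-least-$3$ condition together with the minimality of $S$ should supply enough slack to guarantee that every $v \in S$ still receives at least two additional vertices beyond itself.

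For the equality direction, assume $\gp{G} = n/3$. Then the charging above must be tight at every $v \in S$: each $v$ is assigned exactly three vertices, no conflict absorbs slack, and the forcing chain emanating from $v$ is extremely short. A careful case analysis on the two \emph{extra} vertices charged to $v$ --- both lying in $N[v]$, with no unobserved neighbor outside $\{v\} \cup S$ --- should force them to be either two degree-$1$ leaves pendant at $v$, or two degree-$2$ vertices that together with $v$ span a triangle. Collapsing these pendant pairs recovers a connected graph $H$ on $|S|$ vertices, exhibiting $G \in \mathcal{T}$. The sole obstruction to this reduction is the possibility that every $v \in S$ has both extra-charged neighbors lying in $S$ (or being mutually shared), and a direct examination of small graphs with $n = 6$ and $\gp{G} = 2$ should identify $K_{3,3}$ as the unique such exception.

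The main obstacle is the charging bookkeeping: constructing $\phi$ cleanly when PDS members share neighbors and when the forcing process propagates through long degree-$2$ chains, while preserving the ``three vertices per PDS member'' count. Once the charging is tight, the equality analysis is largely mechanical, but verifying that $K_{3,3}$ is the only graph outside $\mathcal{T}$ meeting the bound requires a separate ad hoc argument on small bipartite cases.
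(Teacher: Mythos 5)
First, note that the paper does not prove this statement: it is quoted verbatim as Theorem~3 of Zhao, Kang, and Chang \cite{zkc06}, so there is no in-paper argument to compare yours against. Judged on its own terms, your outline has the right general shape (reduce to a minimum power dominating set $S$ whose vertices all have degree at least $3$, then count), but the step that carries all the weight is exactly the one you defer: the claim that, after resolving overlaps among the sets $N[v]$ for $v\in S$ and attributing zero-forced vertices to members of $S$, every $v\in S$ still receives at least three vertices. This is not something that ``minimality plus degree at least $3$ should supply enough slack'' for --- it \emph{is} the theorem. For instance, two adjacent degree-$3$ vertices $u,v\in S$ sharing a common neighbor have $\left|N[u]\cup N[v]\right|=5$, so the domination step alone cannot pay for both of them and the deficit must be recovered from the forcing phase; deciding which member of $S$ a forcing chain is charged to, showing enough chains exist, and showing that minimality of $S$ forbids the bad configurations are precisely the content of the argument in \cite{zkc06}, and none of it appears in your sketch. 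As written, the proposal does not establish the inequality $\gp{G}\leq \frac{n}{3}$, let alone the equality characterization.

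The equality direction inherits the same problem: one cannot analyze where the charging is tight until the charging map $\phi$ has actually been constructed and shown to have fibers of size at least $3$. Your predicted endpoint is correct --- each $v\in S$ owning two private neighbors that form either two pendant leaves or a pendant triangle is exactly membership in $\mathcal{T}$, and the low-degree case $\Delta(G)\leq 2$ is handled properly, with $P_3$ and $C_3$ correctly placed in $\mathcal{T}$ --- but the passage from tightness to that structure is asserted rather than argued, and the claim that $K_{3,3}$ is the \emph{only} graph outside $\mathcal{T}$ attaining the bound is explicitly left as ``a separate ad hoc argument.'' Since this theorem is imported from the literature, the practical fix is simply to cite \cite{zkc06}; if you want a self-contained proof, the conflict-resolution and chain-attribution bookkeeping must be made explicit before the tightness analysis can begin.
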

    
    This gives an upper bound for $\gpk{G}$ in terms of the size of the vertex set and equality conditions, as demonstrated in the next corollary.
    
    \begin{cor}
    Let $G$ be a connected graph with $n\geq 3$ vertices. Then $\gpk{G}\leq (k+1)\frac{n}{3}$ for $k\geq 0$. When $k=0$, this is an equality if and only if $G\in \mathcal{T}\cup\{K_{3,3}\}$. When $k\geq 1$, this is an equality if and only if $G\in\mathcal{T}$.
    \end{cor}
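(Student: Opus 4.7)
The upper bound $\gpk{G} \leq (k+1)\frac{n}{3}$ follows at once by chaining Proposition \ref{basebounds}, which gives $\gpk{G} \leq (k+1)\gp{G}$, with Theorem \ref{thm:powdomnover3}, which gives $\gp{G} \leq \frac{n}{3}$. The case $k = 0$ is precisely Theorem \ref{thm:powdomnover3}, so the remaining work is the equality characterization for $k \geq 1$: rule out $K_{3,3}$ (forward direction) and show that every $G \in \mathcal{T}$ attains the bound (reverse direction).

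For the forward direction, equality in the chain above forces $\gp{G} = \frac{n}{3}$, so by Theorem \ref{thm:powdomnover3} we have $G \in \mathcal{T} \cup \{K_{3,3}\}$. To exclude $K_{3,3}$, label its bipartition $X \cup Y$ with $X = \{x_1, x_2, x_3\}$ and $Y = \{y_1, y_2, y_3\}$, and observe that the multiset $\{x_1, y_1, y_2\}$ is a $1$-rPDS: deleting any one element leaves a pair whose closed neighborhood covers at least five of the six vertices, and the one possibly missing vertex (namely $y_3$, in the case that $x_1$ is removed) is immediately zero-forced by any $x_i$. Therefore $\gpkother{1}{K_{3,3}} \leq 3$, and iterating Lemma \ref{plusgammap} yields $\gpkother{k}{K_{3,3}} \leq 3 + 2(k-1) = 2k + 1 < 2k+2 = (k+1)\frac{n}{3}$ for every $k \geq 1$, as needed.

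For the reverse direction, let $G \in \mathcal{T}$ and write $n = 3m$ with $m = |V(H)|$. If $m = 1$ then $G \in \{P_3, K_3\}$ has $\gp{G} = 1$, and Observation \ref{obs:comparing}(3) gives $\gpk{G} = k+1 = (k+1)\frac{n}{3}$. If $m \geq 2$, every $v \in V(H)$ has $\deg_G(v) = \deg_H(v) + 2 \geq 3$ while every attached vertex has degree at most $2$, so Observation \ref{obs:deg3} provides a $\gpkset$ $S$ whose underlying set lies in $V(H)$. The crux of the argument is the claim that any power dominating set $P \subseteq V(H)$ must equal $V(H)$: for each $v \in V(H)$ the attached pair $\{v', v''\}$ has all of its neighbors in $\{v, v', v''\}$, and a short case split on whether the optional edge $v'v''$ is present shows that zero forcing cannot reach $\{v', v''\}$ (until one of $v', v''$ is observed, $v$ has at least two unobserved neighbors, while neither $v'$ nor $v''$ has any neighbor outside $\{v, v', v''\}$ to force it), so $v$ must be dominated and hence $v \in P$. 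Applying this to the power dominating set contained in the underlying set of $S \setminus F$ for each admissible $F$ of size $k$ forces $\pmus{v} \geq k+1$ for every $v \in V(H)$ (otherwise removing all copies of $v$, padded to size $k$, would leave a multiset whose underlying set misses $V(H)$), and summing gives $|S| \geq (k+1)m = (k+1)\frac{n}{3}$; Proposition \ref{prop:twotermpathsortermcycle} supplies the matching upper bound on each $\pmus{v}$.

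The main obstacle is the ``$P \subseteq V(H) \Rightarrow P = V(H)$'' claim inside $\mathcal{T}$-graphs: Proposition \ref{prop:twotermpathsortermcycle} and Observation \ref{obs:deg3} handle all the counting once this is in hand, so the only genuinely new work is the zero-forcing chicken-and-egg for the attached pair $\{v', v''\}$, which separates cleanly into the two-leaves case ($v'v''$ absent) and the triangle case ($v'v''$ present).
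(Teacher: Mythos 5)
Your proof is correct, and it follows the paper's skeleton (upper bound from Proposition \ref{basebounds} plus Theorem \ref{thm:powdomnover3}, then narrowing equality to $\mathcal{T}\cup\{K_{3,3}\}$) but diverges in how the two remaining cases are dispatched. For excluding $K_{3,3}$, the paper forward-references Corollary \ref{cor:K33}, i.e.\ the exact value $\gpk{K_{3,3}}=k+\left\lfloor k/5\right\rfloor+2$ obtained later from the full bipartite machinery (uncover numbers and the Jump Condition); you instead exhibit the explicit $1$-rPDS $\{x_1,y_1,y_2\}$ and iterate Lemma \ref{plusgammap} to get $\gpk{K_{3,3}}\leq 2k+1<2(k+1)$. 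Your route is self-contained and avoids the forward dependence on Section \ref{sec:completebipartite}, at the cost of a weaker (but entirely sufficient) bound. For graphs in $\mathcal{T}$, the paper simply asserts that equality ``follows from Proposition \ref{prop:twotermpathsortermcycle}''; your write-up is a fleshed-out version of that same idea, and it is in fact more careful than the paper on two points: you handle the degenerate case $|V(H)|=1$ (where $\Delta(G)=2$, so Observation \ref{obs:deg3} does not apply and one must fall back on Observation \ref{obs:comparing}), and you justify why the underlying set of the $\gpkset$ must be all of $V(H)$ — though your hands-on zero-forcing argument for ``$P\subseteq V(H)$ implies $P=V(H)$'' could be replaced by the one-line observation that any power dominating set has size at least $\gp{G}=|V(H)|$ and so, being contained in $V(H)$, must equal it. Both approaches are valid; yours trades the sharper $K_{3,3}$ formula for independence from the later sections.
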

    
    \begin{proof}
    The upper bound is given by Proposition \ref{basebounds} and Theorem \ref{thm:powdomnover3}. From these results, we need only consider $\mathcal{T}\cup \{K_{3,3}\}$ for equality. The $k=0$ case follows directly from the power domination result. For $k\geq 1$, equality for graphs in $\mathcal{T}$ follows from  Proposition \ref{prop:twotermpathsortermcycle}. For the case of $K_{3,3}$, $\gp{K_{3,3}}=2$ and we will show in Corollary \ref{cor:K33} that $\gpk{K_{3,3}} = k +\left\lfloor \frac{k}{5}\right\rfloor +2 < 2(k+1)$ for $k\geq 1$. 
    \end{proof}

\subsection{The overlapping power dominating sets bound}

It is often the case in PMU-defect-robust power domination that we would like to overlap power dominating sets in such a way that the combined set is PMU-defect-robust. Therefore, the following parameter is introduced.

\begin{defn}\label{def:sj}
Let $G$ be a graph and choose an integer $j \geq \gp{G}$. Let $\bigpdsj{G}$ be the maximum cardinality of a set $B \subseteq V(G)$ such that if $A \subseteq B$ and $|A|=j$, then $A$ is a power dominating set. If no such set $B$ exists, $\bigpdsj{G}$ is undefined. When $j=\gp{G}$, we denote $\bigpdsjother{G}{\gp{G}}$ by $\bigpds{G}$.
\end{defn}

For examples of $\bigpds{G}$, see Figure \ref{fig:bigpds}.

\begin{figure}[htbp]
\begin{center}
\includegraphics[scale=1]{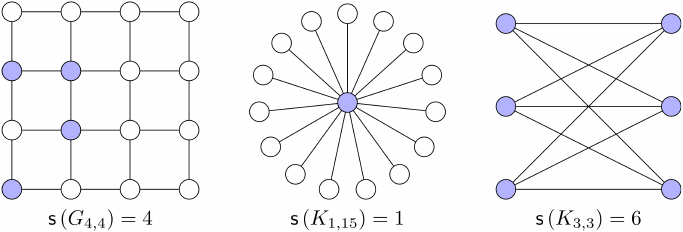}

\end{center}
    \captionsetup{width=.9\linewidth}
\caption{Examples of the parameter $\bigpds{G}$, with the set shaded}
\label{fig:bigpds}
\end{figure}

\begin{obs}
Note that a set $B$ which satisfies the conditions of Definition \ref{def:sj} is a $k$-rPDS and also a $k$-fault-tolerant set for $k =\bigpdsj{G} - j$. 
\end{obs}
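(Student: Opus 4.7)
The plan is to verify both claims directly from Definition \ref{def:sj}, treating $B$ as a multiset with every vertex of multiplicity one, so that submultisets of $B$ are exactly subsets of $B$. Set $k = \bigpdsj{G} - j$, so $|B| = k + j$.

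For the $k$-fault-tolerant claim, I would take any $F \subseteq B$ with $|F| \le k$ and argue that $B \setminus F$ is a power dominating set. When $|F| = k$, we get $|B \setminus F| = j$, and since $B \setminus F \subseteq B$ has cardinality exactly $j$, Definition \ref{def:sj} gives that $B \setminus F$ is a power dominating set. When $|F| < k$, the set $B \setminus F$ contains some $j$-element subset $A$, which is a power dominating set by the same definition; then a standard monotonicity remark (any superset of a power dominating set is a power dominating set, since the two observation rules only add vertices to $PDS$) finishes the case.

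For the $k$-rPDS claim, I would take any submultiset $F$ of $B$ with $|F| = k$; because $B$ has all multiplicities one, $F$ is just a subset of $B$. Then $B \setminus F$ has exactly $j$ elements, which by Definition \ref{def:sj} is a power dominating set, and in particular contains one (itself). By Observation \ref{obs:krPDScontainsessential}'s underlying logic, this is what is needed for the $k$-rPDS condition in Definition \ref{def:gpk}.

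There is really no obstacle here: the statement is a direct unpacking of the definitions, and the only substantive ingredient is the monotonicity of the power domination process under adding vertices to $S$, which I would mention in one sentence. The order I would write the argument in is: fix $k = \bigpdsj{G} - j$, handle the fault-tolerant case first (with the $|F|=k$ and $|F|<k$ split), and then observe that the $k$-rPDS conclusion follows immediately because for a multiplicity-one multiset $B$, the $k$-rPDS condition reduces to precisely the $k$-fault-tolerant condition.
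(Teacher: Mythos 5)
Your argument is correct and is exactly the direct definition-unpacking the paper intends: the observation is stated without proof precisely because removing $k=\bigpdsj{G}-j$ PMUs from $B$ leaves at least $j$ vertices, every $j$-subset of which is a power dominating set, with monotonicity of the power domination process covering the $|F|<k$ case. No gaps; your treatment of $B$ as a multiplicity-one multiset so that the $k$-rPDS and $k$-fault-tolerant conditions coincide is the right reading.
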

The connection between $\bigpdsj{G}$, fault-tolerance, and robustness when $j=\gp{G}$, is shown in the next proposition.
\begin{prop}\label{prop:gpklowQ}
    For any graph $G$ and $k\geq 0$,  if  $\bigpds{G} \geq k+\gp{G}$ then $\vftk{G}=\gpk{G}=k+\gp{G}$.
    \end{prop}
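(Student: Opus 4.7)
The plan is to prove the two equalities by sandwiching $\vftk{G}$ and $\gpk{G}$ between the same lower and upper bounds, both of which equal $k + \gp{G}$. The upper bound will be obtained constructively from a witness set for $\bigpds{G}$, and the lower bound will come directly from Proposition \ref{basebounds} together with the comparison $\gpk{G} \leq \vftk{G}$ from Observation \ref{obs:comparing}.

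For the upper bound, I would start by invoking Definition \ref{def:sj} with $j = \gp{G}$ to fix a set $B \subseteq V(G)$ with $|B| = \bigpds{G}$ such that every $\gp{G}$-subset of $B$ is a power dominating set. Since $|B| \geq k + \gp{G}$ by hypothesis, I can choose any subset $B' \subseteq B$ with $|B'| = k + \gp{G}$. For any $F \subseteq B'$ with $|F| = k$, the set $B' \setminus F$ is a subset of $B$ of size exactly $\gp{G}$, hence it is a power dominating set. This shows $B'$ is a $k$-fault-tolerant power dominating set, so $\vftk{G} \leq k + \gp{G}$. Regarding $B'$ as a multiset of PMUs with all multiplicities equal to one, the same argument (or an appeal to Observation \ref{obs:comparing}(2)) gives $\gpk{G} \leq k + \gp{G}$.

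For the lower bound, the left inequality of Proposition \ref{basebounds} gives $\gp{G} + k \leq \gpk{G}$, and Observation \ref{obs:comparing}(2) gives $\gpk{G} \leq \vftk{G}$. Combined with the upper bounds from the previous step, we obtain
\[
\gp{G} + k \;\leq\; \gpk{G} \;\leq\; \vftk{G} \;\leq\; k + \gp{G},
\]
forcing equality throughout.

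I do not expect any real obstacle here; the main content of the statement is the observation that a witness set for $\bigpds{G}$ of the right size automatically serves as a $k$-fault-tolerant (and thus $k$-PMU-defect-robust) power dominating set. The only minor care point is to remember that a $k$-fault-tolerant PDS in the sense of \cite{pcw10} is a set of vertices rather than a multiset, and to note explicitly that such a set, viewed as a multiset with all multiplicities one, is also a $k$-rPDS — so that the upper bound transfers cleanly to $\gpk{G}$.
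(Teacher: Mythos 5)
Your proposal is correct and follows essentially the same route as the paper: both take any $k+\gp{G}$ elements of a witness set for $\bigpds{G}$ to get the upper bound on $\vftk{G}$ and hence on $\gpk{G}$, and both close the sandwich with the lower bound $\gp{G}+k \leq \gpk{G}$ from Proposition \ref{basebounds}. Your extra remark about viewing the set as a multiset with multiplicities one is a fine (if implicit in the paper) clarification.
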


    \begin{proof}
    If $\bigpds{G} \geq k+\gp{G}$, then there exists a set $S$ of size at least $k+\gp{G}$ so that any $\gp{G}$ elements of $S$ form a power dominating set of $G$. Thus, any $\gp{G}+k$ elements of $S$ form a $k$-rPDS and also a $k$--fault-tolerant power dominating set of $G$ of size $\gp{G}+k$ and so $\gpk{G}\leq \vftk{G} \leq \gp{G}+k$. By the lower bound in Proposition \ref{basebounds}, $\gpk{G} \geq \gp{G}+k$. 
    \end{proof}

Note that in Definition \ref{def:sj}, if $j>\gp{G}$, it is possible that some subsets of $B$ of size $j$ will not be minimal, however, these subsets will still be power dominating sets. Thus, we use the parameter $\bigpdsj{G}$ to create a new upper bound.

\begin{thm}\label{sjbound}
Given a graph $G$, integers $k \geq 1$ and $\bigpdsj{G} > j \geq \gp{G}$.  Using the division algorithm, determine non-negative integers $q$ and $r$ such that $k+1=q(\bigpdsj{G} -j+1)+r$ and $0 \leq r < \bigpdsj{G}-j+1$. Then, 
\[
\gpk{G} \leq 
\left\{\begin{array}{ll}
  \bigpdsj{G} \left(\frac{k+1}{\bigpdsj{G}-j+1}\right)         & \text{if $r=0$}\\
   \bigpdsj{G} \left\lfloor \frac{k+1}{\bigpdsj{G}-j+1} \right\rfloor + r+(j-1)         &\text{if $r> 0$}
    \end{array} \right.
\]
\end{thm}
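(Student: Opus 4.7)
The plan is to exhibit an explicit multiset whose size matches the stated upper bound and to verify it is a $k$-rPDS by a direct counting argument. Fix a set $B\subseteq V(G)$ with $|B|=\bigpdsj{G}=:s$ such that every $j$-element subset of $B$ is a power dominating set. The guiding principle is: as long as at least $j$ vertices of $B$ still carry at least one PMU after deletions, the surviving underlying set contains a power dominating set, since any $j$-subset of $B$ is already a PDS. So it suffices to guarantee that no choice of $k$ PMU deletions can \emph{eliminate} (that is, remove every PMU from) more than $s-j$ vertices of $B$.

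For the case $r=0$, place $q$ PMUs on every vertex of $B$, giving a multiset of size $qs$. Each vertex requires exactly $q$ deletions to be eliminated, so at most $\lfloor k/q\rfloor$ vertices can be killed. Since $k+1=q(s-j+1)$, one checks $\lfloor k/q\rfloor = \lfloor (q(s-j+1)-1)/q\rfloor = s-j$, and the size $qs$ matches $s(k+1)/(s-j+1)$.

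For the case $r>0$, place $q+1$ PMUs on $r+j-1$ chosen vertices of $B$ (call these \emph{expensive}) and $q$ PMUs on the remaining $s-r-j+1$ vertices (\emph{cheap}). Note $r+j-1\le s$ since $r\le s-j$. The total size is $qs+r+j-1$. To eliminate $a$ cheap and $b$ expensive vertices costs $aq+b(q+1)=(a+b)q+b$ deletions. If the adversary attempts $a+b\ge s-j+1$, then because only $s-r-j+1$ cheap vertices exist one must have $b\ge (s-j+1)-(s-r-j+1)=r$, forcing the cost to be at least $q(s-j+1)+r=k+1$, exceeding the budget of $k$ deletions. Hence at most $s-j$ vertices of $B$ are ever eliminated, so at least $j$ vertices of $B$ retain a PMU and the multiset is a $k$-rPDS.

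The main obstacle is choosing the number of upgraded vertices so that the cost inequality sharply rules out eliminating $s-j+1$ or more vertices, and the choice $r+j-1$ is precisely tuned for this: any attempt to kill $s-j+1$ vertices must include at least $r$ expensive ones, and the $+r$ penalty is exactly what pushes the required deletions past $k$. The remaining verification is routine bookkeeping, chiefly checking $r+j-1\le s$ and that the division-algorithm identity $k+1=q(s-j+1)+r$ is used consistently throughout the cost count.
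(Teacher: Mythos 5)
Your construction is identical to the paper's (place $q$ PMUs everywhere when $r=0$; upgrade exactly $r+j-1$ vertices to $q+1$ PMUs when $r>0$), and your verification rests on the same key fact the paper uses, namely that the $s-j+1$ least-loaded vertices of $B$ together carry $k+1$ PMUs, so no $k$ deletions can strip more than $s-j$ vertices of $B$. The proof is correct and essentially the same as the paper's, with your explicit cost count $aq+b(q+1)\geq q(s-j+1)+r=k+1$ being a slightly more detailed rendering of the paper's ``worst case'' remark.
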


\begin{proof}

Let $B= \{v_1,v_2,\ldots,v_{\bigpdsj{G}}\} \subseteq V(G)$ such that every subset of $B$ of order $j$ is a power dominating set. Use the division algorithm to find nonnegative integers $q$ and $r$ such that $k+1=q(\bigpdsj{G}-j+1)+r$ and $0 \leq r < \bigpdsj{G}-j+1$. 

If $r=0$, then place $q=\frac{k+1}{\bigpdsj{G}-j+1}$ PMUs on each vertex in $B$. Note that we need at least $j$ vertices to have PMUs after $k$ PMUs are removed. However, any  set of $\bigpdsj{G}-j+1$ vertices in $B$ has $k+1$ PMUs, and so the maximum number of vertices that we can remove all PMUs from is $\bigpdsj{G}-j$. This leaves at least $j$ vertices with PMUs, and so we have constructed a $k$-rPDS of $G$.

If $r>0$, let $q= \left \lfloor \frac{k+1}{\bigpdsj{G}-j+1} \right \rfloor$.
Place $q+1$ PMUs on the first $r+j-1$ vertices of $B$. Place $q$ PMUs on the remaining $(\bigpdsj{G}-j+1)-r$ vertices of $B$. Again, we need at least $j$ vertices to have PMUs after $k$ PMUs are removed. Observe that the $\bigpdsj{G}-j+1$ vertices of $B$ with the fewest PMUs have a total of 
\begin{align*}
    q((\bigpdsj{G}-j+1)-r) + (q+1)r  &= q(\bigpdsj{G}-j+1)+r\\
                                &=k+1
\end{align*}
PMUs. This means that in the worst case, we have $j-1$ vertices with $q+1$ PMUs and $1$ vertex with $1$ PMU, and so we have constructed a $k$-rPDS of $G$.
\end{proof}

We can apply Theorem \ref{sjbound} to improve the bound from Proposition \ref{basebounds} as demonstrated in the next example.

\begin{ex}
Consider the grid graph on 16 vertices, $G_{4,4}$, shown in Figure \ref{fig:bigpds}. Observe that $\gp{G_{4,4}}=2$ and $\bigpds{G_{4,4}}=4$. Then the bound in Theorem \ref{sjbound} reduces to
\[
\gpk{G_{4,4}} \leq 
\left\{\begin{array}{ll}
   4 \left(\frac{k+1}{3}\right)         & \text{if $r=0$}\\
   4 \left\lfloor \frac{k+1}{3} \right\rfloor + r+1         &\text{if $r> 0$}
    \end{array} \right.
\]
where $r\equiv k+1 \pmod 3$. Note that the bound  in Proposition \ref{basebounds} yields $\gpk{G_{4,4}}\leq 2(k+1)$. Therefore, the bound in Theorem \ref{sjbound} is an improvement.
\end{ex}

The bound in Theorem \ref{sjbound} is not necessarily tight when $\bigpds{G} < |V(G)|$. To see this, Example \ref{K34} in Section~\ref{sec:completebipartite} gives a formula for $\gpk{K_{3,4}}$ for all $k \geq 0$. Note that $\gpk{K_{3,4}}=2$ and $\bigpds{K_{3,4}}=4$. The bound in Theorem \ref{sjbound} agrees with the $k$-PMU-defect-robust power domination number when $k \leq 5$. When $k=6$ the upper bound given by Theorem \ref{sjbound} is $\gpk{G} \leq 10$ but the actual value is $\gpk{G}=9$. 

\subsection{The $\bigpdsj{G}$ bound when $j > \gp{G}$}

The natural question is whether  Theorem \ref{sjbound} is tight and if not, whether it improves the upper bound of Proposition \ref{basebounds}.

\begin{obs}\label{obs:sjboundnottight}
The $\bigpdsj{G}$ bound in Theorem \ref{sjbound} cannot be tight for all $k$. To see this, given a graph $G$, an integer $j>\gp{G}$ and $\bigpdsj{G}>j$,  suppose for an integer $k \geq 1$, $k+1$ is divisible by $\bigpdsj{G}-j+1$. Define $U=\bigpdsj{G}\left(\frac{k+1}{\bigpdsj{G}-j+1}\right)$, the bound given by Theorem \ref{sjbound} for a $k$-rPDS. Then Theorem \ref{sjbound} implies $\ddot{\gamma}_p^{k+1}(G) \leq \bigpdsj{G} \left\lfloor \frac{k+2}{\bigpdsj{G}-j+1} \right\rfloor + 1+(j-1) = U+j$, but Lemma \ref{plusgammap} gives an upper bound of $\ddot{\gamma}_p^{k+1}(G) \leq U +\gp{G}$, which is smaller.
\end{obs}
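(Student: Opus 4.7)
The plan is a direct verification of the two bounds named in the statement followed by a comparison, since the argument is essentially algebraic bookkeeping.

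First I would re-apply Theorem \ref{sjbound} at index $k+1$. Under the hypothesis $k+1 = q(\bigpdsj{G}-j+1)$ with $q$ a positive integer, dividing $k+2$ by $\bigpdsj{G}-j+1$ via the division algorithm yields the same quotient $q = (k+1)/(\bigpdsj{G}-j+1)$ together with remainder $r = 1$. Here I need the standing assumption $\bigpdsj{G} > j$ to guarantee $\bigpdsj{G}-j+1 \geq 2$, so that $r = 1$ really lies in the allowed range $[0,\bigpdsj{G}-j+1)$. Feeding this into the $r > 0$ branch of Theorem \ref{sjbound} then gives
\[
\gpkplus{G} \leq \bigpdsj{G}\, q + 1 + (j-1) = \bigpdsj{G}\cdot\frac{k+1}{\bigpdsj{G}-j+1} + j = U + j.
\]

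Next I would use Theorem \ref{sjbound} at index $k$ — where the $r=0$ branch applies exactly and gives $\gpk{G} \leq U$ — and combine it with Lemma \ref{plusgammap} to obtain
\[
\gpkplus{G} \leq \gpk{G} + \gp{G} \leq U + \gp{G}.
\]

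Finally, since the hypothesis forces $j > \gp{G}$, we have $U + \gp{G} < U + j$, so Lemma \ref{plusgammap} strictly beats the Theorem \ref{sjbound} bound at the index $k+1$. In particular, the Theorem \ref{sjbound} bound cannot be tight at $k+1$, which shows it cannot be tight for every value of $k$ simultaneously. The only real obstacle is making sure the division-algorithm step is done carefully — specifically, that the shift from $k$ to $k+1$ moves the remainder from $0$ to $1$ rather than wrapping around — and this is exactly where the assumption $\bigpdsj{G} > j$ gets used.
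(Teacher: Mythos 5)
Your proposal is correct and follows exactly the paper's own argument: apply Theorem \ref{sjbound} at index $k+1$ (where the remainder becomes $r=1$, using $\bigpdsj{G}>j$ to keep it in range), apply it at index $k$ in the $r=0$ branch to get $\gpk{G}\leq U$, chain with Lemma \ref{plusgammap}, and conclude via $j>\gp{G}$. Your explicit verification of the division-algorithm bookkeeping is a welcome bit of extra care, but the route is the same.
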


Although the $\bigpdsj{G}$ upper bound can never be tight for all $k \geq 1$, if $j > \gp{G}$, it is possible that it is tight for some $k \geq 1$. 

\begin{ex}\label{ex:gridgraphs}
Bjorkman \cite{beththesis} calculated the PMU-defect-robust power domination numbers for small square grid graphs, $G_{n,n}$, for small $k$, shown in Table \ref{tab:G66}.  Dorfling and Henning \cite{dh06} showed that $\gp{G_{6,6}}=2$ and Bjorkman \cite{beththesis} showed that $\bigpds{G_{6,6}}=2$ since no set of 3 vertices has the property that any subset of 2 vertices is a power dominating set. However, Bjorkman found $\{(2,4),(3,2),(6,5),(4,4),(2,6)\}$, which is a set of 5 vertices for which any 3 out of the 5 vertices is a power dominating set, and there is no set of 6 vertices for which every subset of 3 vertices is a power dominating set. Therefore, $\bigpdsjother{G_{6,6}}{3}=5$. The bound in Theorem \ref{sjbound} improve the basic upper bound in Proposition \ref{basebounds} for all $k \geq 4$, as shown in Table \ref{tab:G66}.
\end{ex}

    \begin{center}
    \begin{table}[htbp]
    \centering
    \captionsetup{width=.9\linewidth}
    \caption{Comparison of upper bounds for $G_{6,6}$. The $\bigpdsj{G}$ bound is from Theorem \ref{sjbound} and the $(k+1)\gp{G}$ bound is from Proposition \ref{basebounds}.}\label{tab:G66}
    \begin{tabular}{cccc}
    \hline
    $k$ & $\gpk{G_{6,6}}$ & $\bigpdsj{G}$ bound & $(k+1)\gp{G}$ \\
    \hline
    \hline
    0 & 2 & N/A & 2\\
    \hline
    1 & 4 & N/A  &  4\\ 
    \hline
    2 & 5 &  5 & 6\\
    \hline
    3 & 7 &8 & 8\\
    \hline
    4 & 8 or 9 & 9 & 10\\
    \hline
    5 & 9 or 10 & 10 & 12\\
    \hline
    6 & & 13 & 14\\
    \hline
    7 &  & 14 & 16\\
    \hline
    8 &  & 15  & 18 \\
    \hline
    \end{tabular}
    \end{table}
    \end{center}

 In Table \ref{tab:G66}, we see that as $k$ increases, the gap between these bounds grows. We formalize this in the following proposition.
 
 \begin{prop}\label{jbetterif}
Let $G$ be a graph so that for some integer $j > \gp{G}\geq 2$ such that $\bigpdsj{G} > j$ and 
\[\gp{G} > \frac{\bigpdsj{G}}{\bigpdsj{G}-j+1}.\]
Then the upper bound in Theorem \ref{sjbound} improves the upper bound in Proposition \ref{basebounds} for sufficiently large $k$.
\end{prop}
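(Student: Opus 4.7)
The plan is to view both upper bounds as functions of $k$ and compare their leading slopes. Let $s=\bigpdsj{G}$ and $a=\bigpdsj{G}-j+1$, so the hypothesis is $\gp{G}>s/a$. Write $T(k)$ for the upper bound on $\gpk{G}$ given by Theorem~\ref{sjbound}, and $P(k)=(k+1)\gp{G}$ for the upper bound from Proposition~\ref{basebounds}. I want to show that $T(k)<P(k)$ for all sufficiently large $k$.

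First I would establish a clean linear-plus-constant overestimate for $T(k)$, namely
\[
T(k)\ \leq\ \frac{s(k+1)}{a}\ +\ (j-1).
\]
In the $r=0$ case of Theorem~\ref{sjbound} this is an equality (without the $j-1$). In the $r>0$ case, note that $\lfloor (k+1)/a\rfloor=(k+1-r)/a$, so
\[
T(k)\ =\ s\cdot\frac{k+1-r}{a}+r+(j-1)\ =\ \frac{s(k+1)}{a}+r\Bigl(1-\tfrac{s}{a}\Bigr)+(j-1).
\]
Since $j\geq 2$ gives $s-a=j-1\geq 1$, the factor $1-s/a$ is non-positive, so the $r>0$ case is bounded by $\frac{s(k+1)}{a}+(j-1)$ as well.

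Combining this with $P(k)=(k+1)\gp{G}$ yields
\[
P(k)-T(k)\ \geq\ (k+1)\Bigl(\gp{G}-\frac{s}{a}\Bigr)-(j-1).
\]
By hypothesis $\gp{G}-s/a>0$, so the right-hand side is a linear function of $k$ with positive slope and thus becomes strictly positive once $k\geq (j-1)/(\gp{G}-s/a)-1$. For any such $k$, the Theorem~\ref{sjbound} bound strictly improves the Proposition~\ref{basebounds} bound, which is exactly what we want.

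I expect no serious obstacle here — the argument is really just comparing slopes — but the only place to be careful is to absorb the floor and the remainder $r$ into an additive constant independent of $k$, which I handled using the observation that $s\geq a$ forces the $r$-dependent correction in the $r>0$ case to be non-positive. Once that uniform overestimate is in hand, the conclusion is immediate from the strict inequality in the hypothesis.
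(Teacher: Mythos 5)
Your proposal is correct and follows essentially the same route as the paper: both arguments use the division algorithm to write $k+1=q(\bigpdsj{G}-j+1)+r$, reduce the comparison to the difference $(k+1)\left(\gp{G}-\frac{\bigpdsj{G}}{\bigpdsj{G}-j+1}\right)$ minus a constant independent of $k$, and invoke the hypothesis to make the slope positive. Your only departure is cosmetic but slightly sharper bookkeeping --- computing the floor exactly and noting the $r$-dependent correction is non-positive, which unifies the two cases and yields the additive constant $j-1$ in place of the paper's cruder $\bigpdsj{G}$.
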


\begin{proof}
Using the division algorithm, determine non-negative integers $q$ and $r$ such that $k+1=q(\bigpdsj{G} -j+1)+r$ and $0 \leq r < \bigpdsj{G}-j+1$.  Consider the case in which $k+1$ is divisible by $\bigpdsj{G}-j+1$, that is, $r=0$. The difference between the two bounds is
\[
(k+1)\gp{G}-\frac{\bigpdsj{G}(k+1)}{\bigpdsj{G}-j+1}=(k+1)\left(\gp{G}- \frac{\bigpdsj{G}}{\bigpdsj{G}-j+1}\right).
\]
The upper bound in Theorem \ref{sjbound} improves the basic bound when this number is positive. In other words,
\[
\gp{G} > \frac{\bigpdsj{G}}{\bigpdsj{G}-j+1}.
\]

When $k+1$ is not evenly divisible by $\bigpdsj{G}-j+1$, that is, when $r>0$, the difference between the two bounds is 
\begin{align*}
(k+1)\gp{G} - & \left(\bigpdsj{G} \left\lfloor \frac{k+1}{\bigpdsj{G}-j+1} \right\rfloor + r+(j-1) \right) \\ 
            &> (k+1)\gp{G} - \bigpdsj{G} \left( \frac{k+1}{\bigpdsj{G}-j+1}\right) - r-j+1 \\
            &= (k+1)\left(\gp{G}-\frac{\bigpdsj{G}}{\bigpdsj{G}-j+1}\right) - (r+j-1)\\
            &> (k+1)\left(\gp{G}-\frac{\bigpdsj{G}}{\bigpdsj{G}-j+1}\right) - \bigpdsj{G}.
\end{align*}

Observe that the first term above is a multiple of $k+1$ and the second term is constant. Thus, if $j$ and $\bigpdsj{G}$ satisfy the same conditions as in the first case, the difference will be positive for sufficiently large values of $k$.
\end{proof}

\subsection{When any set of size $\gp{G}$ is a power dominating set}
When $\bigpds{G}=|V(G)|$, any set of size $\gp{G}$ is a power dominating set of $G$. In this case, the bound in Theorem \ref{sjbound} is tight.

\begin{prop}\label{sequalsn}
Given a graph $G$ on $n$ vertices, an integer $k>0$, and $\bigpds{G}=|V(G)|$. Using the division algorithm, determine non-negative integers $q$ and $r$ such that $k+1=q(n-\gp{G}+1)+r$ and $0 \leq r < n-\gp{G}+1$. Then,
\[
\gpk{G} =
\left\{\begin{array}{ll}
  n \left(\frac{k+1}{n-\gp{G}+1}\right)         & \text{if $r=0$}\\
   n \left\lfloor \frac{k+1}{n-\gp{G}+1} \right\rfloor + r+(\gp{G}-1)         &\text{if $r> 0$}
    \end{array} \right.
\]
\end{prop}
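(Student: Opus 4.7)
The plan is to establish matching upper and lower bounds. The upper bound comes for free from Theorem \ref{sjbound}: specializing $j=\gp{G}$ and using $\bigpdsj{G}=n$ in that theorem produces a $k$-rPDS of exactly the stated cardinality, so only the matching lower bound remains to be proved.

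For the lower bound, I would fix an arbitrary $k$-rPDS $S$ and let $v_1,\ldots,v_m$ be the distinct vertices in its support, with multiplicities $s_1\geq s_2\geq\cdots\geq s_m\geq 1$. The hypothesis $\bigpds{G}=n$ says every $\gp{G}$-element subset of $V(G)$ is a power dominating set, so a subset of $V(G)$ is a power dominating set precisely when it has at least $\gp{G}$ vertices. Hence after removing any $k$ PMUs from $S$, the remaining multiset must still be supported on at least $\gp{G}$ distinct vertices.

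The next step is to translate this condition into an inequality on the sorted multiplicities. The worst-case adversary eliminates vertices greedily starting from smallest multiplicity, and can zero out $v_{i+1},\ldots,v_m$ exactly when $s_{i+1}+\cdots+s_m\leq k$. Thus $S$ is a $k$-rPDS if and only if the adversary can eliminate at most $m-\gp{G}$ vertices, which is equivalent to
\[
s_{\gp{G}}+s_{\gp{G}+1}+\cdots+s_m\geq k+1.
\]
Combining the inequality $(m-\gp{G}+1)s_{\gp{G}}\geq\sum_{i=\gp{G}}^{m} s_i\geq k+1$ with $s_1,\ldots,s_{\gp{G}-1}\geq s_{\gp{G}}$ yields
\[
|S|\geq (\gp{G}-1)s_{\gp{G}}+(k+1)\geq (\gp{G}-1)\left\lceil\tfrac{k+1}{m-\gp{G}+1}\right\rceil+(k+1).
\]

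To finish, I would observe that the ceiling is non-increasing in $m$, so this lower bound is smallest when $m=n$; setting $m=n$ and expanding the ceiling via the division $k+1=q(n-\gp{G}+1)+r$ recovers exactly the two cases in the statement (when $r=0$ the bound becomes $qn$, and when $r>0$ it becomes $nq+r+\gp{G}-1$). This matches the upper bound from Theorem \ref{sjbound}, yielding equality. The main obstacle I anticipate is the adversarial-elimination step: I expect to need a careful argument that the worst removal really is the greedy one on the smallest multiplicities, so that the $k$-rPDS condition is genuinely \emph{equivalent} to, and not merely implied by, the tail-sum inequality above.
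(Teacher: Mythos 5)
Your proof is correct and follows essentially the same route as the paper: the paper also takes the upper bound from Theorem \ref{sjbound} and proves the lower bound by sorting multiplicities, observing that the $n-\gp{G}+1$ smallest must sum to at least $k+1$, and using the Pigeonhole Principle to bound the $\gp{G}-1$ largest multiplicities from below by $\left\lceil \frac{k+1}{n-\gp{G}+1}\right\rceil$. The only cosmetic difference is that the paper indexes over all $n$ vertices (allowing multiplicity zero) rather than over the support, which removes the need for your final minimization over $m$; and the adversarial-elimination step you flag as a potential obstacle is harmless, since the lower bound only needs the easy direction (being a $k$-rPDS \emph{implies} the tail-sum inequality, because zeroing out the smallest multiplicities is an admissible removal).
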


\begin{proof} By Theorem \ref{sjbound}, this is an upper bound.

To show the lower bound, consider a $\gpkset$, $S$, of $G$ and label the vertices of $G$ as $v_1,v_2,\dots,v_n$ such that $\pmus{v_i} \leq \pmus{v_j}$ if $i < j$. Then, 
\[
\sum_{i=1}^{n-\gp{G}+1} \pmus{v_i} \geq k+1
\]
since $k$ PMUs may be removed from the vertices with the fewest PMUs, leaving at least $\gp{G}$ vertices with at least one PMU each. By the Pigeonhole Principle, if there are at least $k+1$ PMUs on $n-\gp{G}+1$ vertices, at least one of these vertices $w$ must have multiplicity $\pmus{w} \geq \frac{k+1}{n-\gp{G}+1}$. This implies that the $\gp{G}-1$ vertices of largest multiplicity must have at least $q$ PMUs each if $r=0$ or at least $q+1$ PMUs each if $r>0$. This implies that $\gpk{G}$ is greater than or equal to the given bound, and hence equality holds.
\end{proof}

We can use Proposition~\ref{sequalsn} to determine the $k$-PMU-defect-robust power domination number of any graph with $\bigpds{G}=|V(G)|$. A \emph{complete multipartite graph} with $m$ partite sets, $K_{a_1,a_2,\ldots,a_m}$, has vertex set $V(K_{a_1,a_2,\ldots,a_m}) = A_1 \cup A_2 \cup \ldots \cup A_m$ and edge set $E(K_{a_1,a_2,\ldots,a_m}) = \{v_iv_j : v_i\in A_i, v_j\in A_j, i\neq j\}$. The graph $K_{3,3,3}$ is shown in Figure \ref{fig:multipartite}.

\begin{figure}[htbp]
\begin{center}
\includegraphics[scale=1]{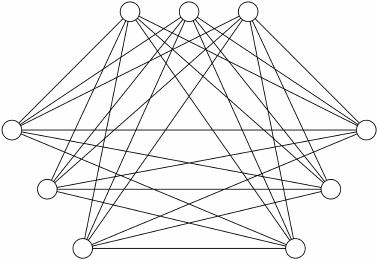}
\end{center}
    \captionsetup{width=.9\linewidth}
\caption{The graph $K_{3,3,3}$}
\label{fig:multipartite}
\end{figure}

\begin{prop}\label{multipartite}
Let $G$ be the complete multipartite graph $K_{3,3,\dots,3}$ with $m$ partite sets for any $m \geq 2$. Then $\bigpds{G}=|V(G)|=3m$. 
\end{prop}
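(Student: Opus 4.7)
The plan is to prove the stronger claim that \emph{every} pair of vertices of $G$ is a power dominating set, which together with $\gp{G}=2$ immediately yields $\bigpds{G} \geq |V(G)| = 3m$; the reverse inequality $\bigpds{G} \leq |V(G)|$ is trivial from the definition. Both $\gp{G}=2$ and the ``every pair works'' claim can be extracted from the same short case analysis.

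First I would establish $\gp{G} \geq 2$. Placing a single PMU at $u \in A_i$ produces $PDS = N[u] = \{u\} \cup (V(G)\setminus A_i)$, so the two remaining vertices $x,y$ of $A_i$ are unobserved. No vertex in $V(G)\setminus A_i$ can force: any $z \in A_j$ with $j \neq i$ has $N(z) = V(G)\setminus A_j \supseteq \{x,y\}$, giving two unobserved neighbors. And $u$ itself has no unobserved neighbors at all. Hence zero forcing is stuck and $\gp{G} \geq 2$.

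Next I would verify that an arbitrary two-element subset $\{u,v\} \subseteq V(G)$ is a power dominating set, in two cases. If $u \in A_i$ and $v \in A_j$ with $i \neq j$, then already at the domination step
\[
N[u] \cup N[v] \;=\; \{u,v\} \cup (V(G)\setminus A_i) \cup (V(G)\setminus A_j) \;=\; V(G),
\]
using $A_i \cap A_j = \emptyset$. If instead $u,v \in A_i$, the domination step yields $PDS = \{u,v\} \cup (V(G)\setminus A_i)$, so the unique unobserved vertex is the third vertex $w$ of $A_i$. Because $m \geq 2$, choose any $z \in A_j$ with $j \neq i$; then $N(z) = V(G)\setminus A_j$, which intersects $A_i$ precisely in $\{u,v,w\}$, and the first two are observed. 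Hence $z$ has exactly one unobserved neighbor and forces $w$. Combining the two cases also gives $\gp{G}\leq 2$, so $\gp{G}=2$.

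The only step requiring any thought is the second case above, where both PMUs sit in the same partite set; the key observation is that $m \geq 2$ guarantees a forcing vertex $z$ in some other partite set whose neighborhood meets $A_i$ exactly in the three vertices $\{u,v,w\}$, isolating $w$ as the unique unobserved neighbor of $z$. Everything else is direct from the structure of complete multipartite neighborhoods.
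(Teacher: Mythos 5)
Your proposal is correct and follows essentially the same route as the paper: split into the two cases of whether the pair of vertices lies in different partite sets (domination step alone suffices) or the same partite set (the third vertex of that set is picked up by a zero forcing step). The only difference is that you spell out the lower bound $\gp{G}\geq 2$ and identify the forcing vertex explicitly, details the paper leaves to the reader with ``Observe that $\gp{G}=2$.''
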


\begin{proof}
Given $m \geq 2$, consider $K_{3,3,\dots,3}$ with $m$ partite sets. Observe that $\gp{G}=2$. Consider any two vertices $v,w \in V(K_{3,3,\dots,3})$. If $v$ and $w$ are in different partite sets, then $N(v) \cup N(w)=V(K_{3,3,\dots,3})$, so the set $\{v,w\}$ is a power dominating set since it is a dominating set. If $v$ and $w$ are in the same partite set $P$, the domination step observes all the vertices except the third vertex in $P$, which can be observed by a zero forcing step. 
\end{proof}

We can now apply Proposition \ref{sequalsn} to $K_{3,3,\dots,3}$.

\begin{cor}\label{multipartitekrob}
Let $G$ be the complete multipartite graph $K_{3,3,\dots,3}$ with $m$ partite sets for any $m \geq 2$. Then 
\[\gpk{K_{3,3,\dots,3}} = \left\{\begin{array}{ll}
  3m \left(\frac{k+1}{3m-1}\right)         & \text{if $r=0$}\\
   3m \left\lfloor \frac{k+1}{3m-1} \right\rfloor + r+(\gp{G}-1)         &\text{if $r> 0$}
    \end{array} \right.. \]
\end{cor}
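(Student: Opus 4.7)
The plan is to apply Proposition \ref{sequalsn} directly to $K_{3,3,\ldots,3}$, since Proposition \ref{multipartite} has already supplied precisely the hypothesis $\bigpds{G} = |V(G)|$ that Proposition \ref{sequalsn} requires. The corollary is therefore essentially a substitution exercise, and the only real content is lining up the parameters.

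The first step I would take is to identify $n$ and $\gp{G}$ for the graph under consideration. The graph $K_{3,3,\ldots,3}$ with $m$ partite sets has $n = 3m$ vertices, and the proof of Proposition \ref{multipartite} already records that $\gp{K_{3,3,\ldots,3}} = 2$ while establishing $\bigpds{K_{3,3,\ldots,3}} = 3m$. So I would briefly restate these two facts to confirm that the hypothesis of Proposition \ref{sequalsn} (namely, $\bigpds{G} = |V(G)|$) is met, and to make the values of $n$ and $\gp{G}$ explicit for use in the formula.

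Next, I would substitute into the formula from Proposition \ref{sequalsn}. With $n = 3m$ and $\gp{G} = 2$, the quantity $n - \gp{G} + 1$ becomes $3m - 1$, which is exactly the divisor appearing in the division algorithm statement of the corollary. The two cases $r = 0$ and $r > 0$ in Proposition \ref{sequalsn} then translate term-by-term to the two cases in the corollary statement; the factor $\gp{G} - 1$ in the $r > 0$ case is simply kept in symbolic form rather than evaluated to $1$.

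There is essentially no obstacle here, since Proposition \ref{sequalsn} has done all the heavy lifting and Proposition \ref{multipartite} has verified its hypothesis. The proof is a one-paragraph invocation, and the only care needed is the bookkeeping check that the divisor $n - \gp{G} + 1 = 3m - 1$ matches what is written in the corollary statement.
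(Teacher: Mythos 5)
Your proposal is correct and matches the paper's approach exactly: the paper presents this corollary as a direct application of Proposition \ref{sequalsn} with $n=3m$ and $\gp{G}=2$, using Proposition \ref{multipartite} to supply the hypothesis $\bigpds{G}=|V(G)|$. Your parameter bookkeeping (in particular $n-\gp{G}+1 = 3m-1$) is the only content needed, and you have it right.
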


Figure \ref{fig:sequalsnex} gives examples of other graphs $G$ for which $\gp{G}=2$ and $\bigpds{G}=|V(G)|$, which also have $\gpk{G}$ given by Proposition \ref{sequalsn}.

\begin{figure}[hbtp]
\begin{center}
\includegraphics[scale=1]{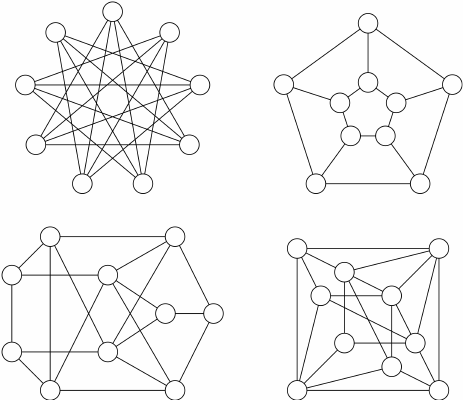}

\end{center}
    \captionsetup{width=.9\linewidth}
\caption{Examples of graphs $G$ for which $\gp{G}=2$ and $\bigpds{G}=|V(G)|$}
\label{fig:sequalsnex}
\end{figure}

A basic characteristic of graphs with $\bigpds{G}=|V(G)|$ is the following restriction on the minimum degree.

\begin{lem}\label{bigpdsNimplies}
Let $G$ be a connected graph on $n \geq 6$ vertices with $\gp{G} \geq 2$ and $\bigpds{G}=n$. Then $\delta(G) \geq 3$. 
\end{lem}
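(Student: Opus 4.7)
My plan is to prove the contrapositive: assuming some vertex $v\in V(G)$ has $\deg(v)\le 2$, I will exhibit a $\gp{G}$-subset of $V(G)$ that is not a power dominating set, contradicting $\bigpds{G}=n$. The strategy is to exploit the small size of $N[v]$: I will construct $S=\{v\}\cup T$ of size $\gp{G}$, where $T\subseteq V(G)\setminus\{v\}$ has size $\gp{G}-1$ and is chosen so that $N[v]$ is already contained in the set of vertices observed by the power domination process applied to $T$. Then $N[S]=N[v]\cup N[T]$ lies in the observed set of $T$, so the observed set of $S$ coincides with that of $T$. Since $|T|=\gp{G}-1<\gp{G}$ forces $T$ not to be a PDS, $S$ is not either, yielding the contradiction.

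It remains to construct $T$, for which I use that $n\ge\gp{G}$ (which follows from Theorem~\ref{thm:powdomnover3}). When $\deg(v)=1$ with unique neighbor $u$, I take $T$ to be any $(\gp{G}-1)$-subset of $V(G)\setminus\{v\}$ containing $u$. Then $N[v]=\{v,u\}\subseteq N[u]\subseteq N[T]$, so $N[v]$ is observed already at the domination step from $T$. When $\deg(v)=2$ with neighbors $u_1,u_2$ and $\gp{G}\ge 3$, I take $T$ to be any $(\gp{G}-1)$-subset of $V(G)\setminus\{v\}$ containing both $u_1$ and $u_2$; again $N[v]=\{v,u_1,u_2\}\subseteq N[u_1]\cup N[u_2]\subseteq N[T]$.

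The main obstacle is the case $\deg(v)=2$ with $\gp{G}=2$, where $|T|=1$ cannot contain both neighbors of $v$. Here I would take $T=\{u_1\}$ and argue that $u_2$ is nevertheless observed from $\{u_1\}$. If $u_1u_2\in E(G)$ this is immediate, since $u_2\in N[u_1]$. Otherwise $u_1u_2\notin E(G)$; then in the zero forcing step starting from $N[u_1]$, the vertex $v\in N(u_1)\subseteq N[u_1]$ is observed, its neighbor $u_1$ is observed, and its only other neighbor $u_2$ is unobserved, so $v$ forces $u_2$ into the observed set. In either subcase, $N[v]=\{v,u_1,u_2\}$ lies in the observed set of $\{u_1\}$, which is a proper subset of $V(G)$ because $\gp{G}\ge 2$ prevents $\{u_1\}$ from being a PDS. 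Hence $S=\{v,u_1\}$ observes a proper subset of $V(G)$, contradicting $\bigpds{G}=n$ and completing the proof.
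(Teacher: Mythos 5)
Your argument is correct and takes essentially the same approach as the paper: both derive a contradiction by exhibiting a $\gp{G}$-subset in which the low-degree vertex $v$ is redundant, so that its observed set is already produced by only $\gp{G}-1$ of its vertices, which is incompatible with $\bigpds{G}=n$ and the minimality of $\gp{G}$. The only real difference is in the degree-$2$ case, where the paper routes the forcing chain through the nearest vertex of degree at least $3$ while you place $v$'s own neighbor(s) in the set and dispose of the $\gp{G}=2$ subcase with a short explicit zero-forcing step; your version is, if anything, the more carefully justified of the two.
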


\begin{proof}
Since $\gp{G} \geq 2$, $G$ is not a path or a cycle, so $G$ has a vertex of degree $3$ or more. Assume for contradiction that there exists $x\in V(G)$ such that $\deg{x}{}=1$. Then there exists a minimum power dominating set of $G$ that contains both $x$ and its one neighbor, however, this set is not minimum as $x$ is redundant.
Assume for contradiction that there exists $v \in V(G)$ be such that $\deg{v}{}=2$. Let $w$ be the nearest high degree vertex on the path containing $v$. Then there is a set containing both $v$ and $w$ that is a minimum power dominating set. But any set containing $w$ will observe $v$, and then $v$ will observe its remaining neighbor in the zero forcing step, so $S$ is not minimum, a contradiction.
\end{proof}

\section{Complete Bipartite Graphs}\label{sec:completebipartite}

The complete bipartite graph $G=K_{a,b}$ is the graph with vertex set $V(G)=A \cup B$ with $A = \{ x_1, x_2, \dots, x_a \}$ and $B = \{ y_1, y_2, \dots, y_b \}$ and edge set $E(G)=\{ \{x_i,y_j\}:x_i \in A, y_j \in B\}$. If one of the partite sides contains one or two vertices, then $\gp{K_{a,b}}=1$ and so the bounds in Proposition \ref{basebounds} agree, giving $\gpk{K_{a,b}}=k+1$. When $3 \leq a \leq b$, the standard approach to finding a minimum set with a specific property is to fix $a$ and look for a pattern for $K_{a,b}$ for all $b \geq a$. Preliminary computation showed that no clear pattern emerged because the modular arithmetic depends on the size of both $a$ and $b$. Therefore, our approach is to find the values of $\gpk{G}$ for $k \geq 0$ recursively for an arbitrary $K_{a,b}$ with fixed $3 \leq a \leq b$. At the end of this section, we give examples of how to derive $\gpk{K_{a,b}}$ for specific values of $a$ and $b$ using this method.

\subsection{Essential multisets and the uncover number}

It is well known that $\gp{K_{a,b}}=2$ with minimum power dominating sets consisting of any one vertex from each side of the partition. A minimal, but not minimum, power dominating set is either a set of any $a-1$ vertices in $A$ or a set of any $b-1$ vertices in $B$. 
This means that for the complete bipartite graphs with $3\leq a \leq b$, essential multisets have 3 distinct structures based on the structure of the underlying minimal power dominating sets.

\begin{defn}\label{coverage}
For complete bipartite graphs $K_{a,b}$ with $3\leq a \leq b$, we distinguish the types of essential multisets contained in $S$ using the following notation:
\begin{enumerate}
\item \emph{$P$-essential multiset}: There is exactly one vertex in $A$ and one vertex in $B$ that have PMUs, that is, there exists $x_i\in A$ and $y_j\in B$ such that $\pmus{x_i}=\pmus{A}\geq 1$ and $\pmus{y_j}=\pmus{B}\geq 1$.
\item \emph{$A$-essential multiset}: No vertices in $B$ have a PMU and all except one vertex in $A$ has a PMU, that is, $|\{x_i \in A:\pmus{x_i} > 0\}| = a-1$.
\item \emph{$B$-essential multiset}: No vertices in $A$ have a PMU, and all except one vertex in $B$ has a PMU, that is, $|\{y_i \in B:\pmus{y_i} > 0\}| = b-1$.
\end{enumerate}
\end{defn}

We demonstrate $P$-essential, $A$-essential, and $B$-essential multisets of $K_{3,4}$ in Figure \ref{fig:K34minimalsets}.

 \begin{figure}[htbp]
 \begin{center}
 \includegraphics[scale=1]{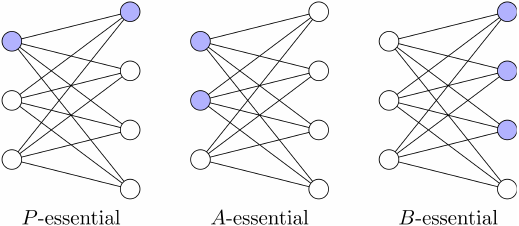}
 \end{center}
    \captionsetup{width=.9\linewidth}
 \caption{Underlying sets for each of the types of essential multisets of $K_{3,4}$}
 \label{fig:K34minimalsets}
 \end{figure}

Throughout this section, it will be useful to order the vertices by the number of PMUs they contain. For the graph $G=K_{a,b}$ with vertex set $V(G)=A \cup B$ where $A = \{ x_1, x_2, \dots, x_a \}$ and $B = \{ y_1, y_2, \dots, y_b \}$, and a given PMU placement, without loss of generality, label the vertices so that
\[
\pmus{x_1} \leq \pmus{x_2} \leq \dots \leq \pmus{x_a}
\]
and 
\[
\pmus{y_1} \leq \pmus{y_2} \leq \dots \leq \pmus{y_b}.
\]

A $k$-rPDS $S$ contains at least $k+2$ PMUs by Proposition \ref{basebounds}, so removing $k$ PMUs leaves at least 2 PMUS. If there are PMUs remaining on both sides of the partition after removing $k$ PMUs, then the resulting multiset remains a power dominating set since it contains a $P$-essential multiset. If the removal of $k$ PMUs leaves one side without a PMU, there must be only one unobserved vertex on the other side to power dominate. Thus, it is useful to define a parameter that quantifies the maximum number of PMUs that may be removed from side A (side B) and still remain an $A$-essential ($B$-essential) multiset.

\begin{defn}\label{uncover}
Let $G=K_{a,b}$ with $3 \leq a \leq b$ and assume $V(G)=A \cup B$ with $|A|=a$ and $|B|=b$. Let $S$ be a multiset of the vertices. Let the vertices of $A$ be labelled so that $\pmus{x_1} \leq \pmus{x_2} \leq \dots \leq \pmus{x_a}$ and similarly label the vertices of $B$ so that $\pmus{y_1} \leq \pmus{y_2} \leq \dots \leq \pmus{y_b}$. The \emph{uncover number for side $A$}, denoted $u_a$, is defined to be \[u_a=\pmus{x_1}+\pmus{x_2}-1.\] Similarly, define the \emph{uncover number for side $B$}, denoted $u_b$, to be \[u_b=\pmus{y_1}+\pmus{y_2}-1.\]
\end{defn}

We make the following observation about negative uncover numbers.

\begin{obs}
It is possible that the uncover number is negative, since $u_a=-1$ when $\pmus{x_1}=\pmus{x_2}=0$. This represents the case that the original multiset $S$ does not contain an $A$-essential multiset. Similarly, $u_b=-1$ when $S$ does not contain a $B$-essential multiset.
\end{obs}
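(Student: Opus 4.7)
The plan is straightforward, since the assertion only asks us to interpret Definition~\ref{uncover} in light of Definition~\ref{coverage}. I would verify the two claims in turn, handling $u_a$ and side $A$ (the statement for $u_b$ and side $B$ follows by the same argument with $A$ and $B$ swapped).

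First, I would note that substituting $\pmus{x_1}=\pmus{x_2}=0$ directly into the formula $u_a=\pmus{x_1}+\pmus{x_2}-1$ immediately yields $u_a=-1$, proving the numerical half of the claim. Since $\pmus{x_i}\geq 0$ always and $\pmus{x_1}\leq\pmus{x_2}$ by the chosen ordering, the only way $u_a$ can be negative is precisely the case $\pmus{x_1}=\pmus{x_2}=0$, so ``$u_a=-1$'' and ``both smallest multiplicities are zero'' are equivalent conditions.

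Next, I would translate the set-theoretic claim into a multiplicity condition. By Definition~\ref{coverage}, $S$ contains an $A$-essential multiset exactly when some submultiset of $S$ has underlying set equal to $A\setminus\{x_i\}$ for some $x_i\in A$; equivalently, at least $a-1$ vertices of $A$ must satisfy $\pmus{x_i}>0$. Under the ordering $\pmus{x_1}\leq\pmus{x_2}\leq\dots\leq\pmus{x_a}$, this ``at most one zero'' condition is equivalent to $\pmus{x_2}\geq 1$, and hence to $\pmus{x_1}+\pmus{x_2}\geq 1$, i.e., $u_a\geq 0$. Contrapositively, $u_a=-1$ iff $S$ does not contain an $A$-essential multiset, which is the claim.

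There is no real obstacle here; the entire argument is an unpacking of definitions, and the only subtlety worth flagging is the role of the ordering convention on the $\pmus{x_i}$, which is what lets us detect ``at most one zero in $A$'' by looking only at the two smallest multiplicities.
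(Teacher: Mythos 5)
Your proposal is correct and matches the paper's intent: the paper states this as an unproved observation that follows immediately from Definitions \ref{coverage} and \ref{uncover}, and your verification is exactly the straightforward unpacking (nonnegativity plus the ordering convention gives $u_a<0$ iff $\pmus{x_1}=\pmus{x_2}=0$, which is equivalent to fewer than $a-1$ vertices of $A$ carrying a PMU, i.e., to $S$ containing no $A$-essential multiset). No issues.
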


The uncover number is useful in determining how many PMUs can be removed from each partite set, as demonstrated in the following lemma.

\begin{lem}\label{maxremove}
Given a multiset $S$ with $u_a \geq 0$ ($u_b \geq 0$), then $u_a$ ($u_b$) is the maximum number of PMUs that can be removed from side A (side B) with the resulting multiset being an $A$-essential ($B$-essential) multiset.
\end{lem}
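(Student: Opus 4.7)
The plan is to read the lemma as a worst-case statement: $u_a$ is the largest $m$ such that every way of removing $m$ PMUs from side $A$ leaves at least $a-1$ vertices of $A$ with a PMU. This is the right translation, because a submultiset of $S$ contains an $A$-essential multiset exactly when its side-$A$ portion has at least $a-1$ vertices that still carry a PMU. I would first record that $u_a \geq 0$ combined with the ordering $\pmus{x_1} \leq \pmus{x_2}$ forces $\pmus{x_2} \geq 1$, and hence $\pmus{x_i} \geq 1$ for all $i \geq 2$, so the side-$A$ portion of $S$ already has at least $a-1$ vertices carrying PMUs.

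For the upper bound, that every removal of $u_a$ PMUs from side $A$ preserves the $A$-essential pattern, I would argue by contradiction. Suppose some submultiset $F$ of size $u_a$ drawn from the side-$A$ PMUs of $S$ brought two distinct vertices $x_i$ and $x_j$ (with $i<j$) to zero PMUs. Then $F$ must contain all $\pmus{x_i}$ copies of $x_i$ and all $\pmus{x_j}$ copies of $x_j$, giving
\[
|F| \;\geq\; \pmus{x_i}+\pmus{x_j} \;\geq\; \pmus{x_1}+\pmus{x_2} \;=\; u_a+1,
\]
contradicting $|F|=u_a$. Hence at most one vertex of $A$ is zeroed out by $F$, and the side-$A$ portion of $S\setminus F$ still has $\geq a-1$ vertices with a PMU.

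For tightness, I would exhibit the obvious adversarial choice: let $F$ consist of all $\pmus{x_1}$ copies of $x_1$ together with all $\pmus{x_2}$ copies of $x_2$. Then $|F|=u_a+1$, and in $S\setminus F$ both $x_1$ and $x_2$ carry no PMUs, so at most $a-2<a-1$ vertices of $A$ are occupied; no $A$-essential multiset is contained in the result. Combined with the upper bound, this pins $u_a$ as the maximum. The $u_b$ claim follows by swapping the roles of $A$ and $B$.

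The main subtlety here is not combinatorial but interpretational. One must read ``the maximum number of PMUs that can be removed'' as ``the largest $m$ such that every removal of $m$ PMUs preserves the $A$-essential structure,'' rather than ``the largest $m$ achievable by some removal''; under the existential reading the value would be $\pmus{A}-(a-1)$, not $u_a$. Once that reading is fixed, the proof collapses to the cheapest-pair counting step above.
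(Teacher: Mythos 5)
Your proof is correct and follows essentially the same approach as the paper's: both arguments rest on the observation that zeroing out two vertices of $A$ requires removing at least $\pmus{x_1}+\pmus{x_2}=u_a+1$ PMUs, while removing exactly that many (all copies at $x_1$ and $x_2$) destroys the $A$-essential structure. Your version merges the paper's two cases ($u_a=0$ and $u_a>0$) into one argument and makes the universal quantifier in ``maximum number that can be removed'' explicit, which is a minor clarity improvement but not a different method.
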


\begin{proof}
Assume first that $u_a=0$. This implies $\pmus{x_1}+\pmus{x_2}=1$, and $\pmus{x_j} \geq 1$ for all $2 \leq j \leq a$. However, the removal of one PMU from $x_2$ results in a multiset that is not $A$-essential. Therefore, the maximum number of PMUs that can be removed from side $A$ to ensure that the resulting multiset is $A$-essential is 0.

If $u_a >0$, $\pmus{x_1}+\pmus{x_2}=u_a+1$, so the removal of $u_a +1$ PMUs may be chosen so that both $x_1$ and $x_2$ have no PMUs, and so the resulting multiset is not $A$-essential. On the other hand, since $\pmus{x_i}+\pmus{x_j} \geq u_a$ for all $1 \leq i < j \leq a$, the removal of $u_a$ PMUs can result in at most one vertex having no PMUs, and so the resulting multiset is $A$-essential. Therefore, the maximum number of PMUs that can be removed from side $A$ to ensure that the resulting multiset is $A$-essential is $u_a$.

The same argument holds for side B.
\end{proof}

We can extend Lemma \ref{maxremove} to determine how many PMUs must be on $A$ and $B$ for a $k$-rPDS.

\begin{thm}\label{kormore}
The multiset $S$ of PMUs for $G=K_{a,b}$ when $3 \leq a \leq b$, $\pmus A =x$ and $\pmus B = y$ and the uncover numbers $u_a$ and $u_b$ defined as above is a $k$-rPDS if and only if $x+u_b \geq k$ and $y+u_a \geq k$. 
\end{thm}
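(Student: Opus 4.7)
The plan is to leverage the structural trichotomy from Definition \ref{coverage}: for $K_{a,b}$ with $3 \leq a \leq b$, a multiset of PMUs contains a power dominating set if and only if it contains a $P$-, $A$-, or $B$-essential multiset. The statement thus reduces to controlling, for every submultiset $F \subseteq S$ with $|F|=k$, whether $S \setminus F$ contains one of these three configurations.

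For the backward direction I would take an arbitrary $F \subseteq S$ with $|F|=k$, set $f_a = \pmusset{A}{F}$ and $f_b = \pmusset{B}{F}$ so that $f_a + f_b = k$, $f_a \leq x$, and $f_b \leq y$, and split into cases. If $f_a < x$ and $f_b < y$, then $S \setminus F$ retains at least one PMU on each side, and those two PMUs form a $P$-essential multiset. If $f_a = x$, then $f_b = k - x$, and since $f_a \leq k$ forces $x \leq k$, the hypothesis $x + u_b \geq k$ yields $u_b \geq f_b \geq 0$; Lemma \ref{maxremove} then guarantees that the $f_b$ PMUs removed from $B$ leave a $B$-essential configuration, which together with the empty $A$ side is a $B$-essential multiset of $S\setminus F$. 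The case $f_b = y$ is symmetric. I would then verify that $f_a = x$ and $f_b = y$ cannot hold simultaneously, because that would force $k = x+y$ and $u_b \geq y$, contradicting the Definition \ref{uncover} bound $u_b \leq y - 1$.

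For the forward direction I would argue contrapositively. Suppose $x + u_b < k$ (the case $y + u_a < k$ is symmetric). I would construct a specific $F$ of size $k$ such that $S \setminus F$ is not a PDS, namely: remove all $x$ PMUs from $A$ and remove $k - x$ additional PMUs from $B$, chosen so that both $y_1$ and $y_2$ are emptied. The inequality $k - x > u_b = \pmusset{y_1}{S} + \pmusset{y_2}{S} - 1$ rearranges to $k - x \geq \pmusset{y_1}{S} + \pmusset{y_2}{S}$, making the emptying possible, with any leftover removals absorbed by other vertices of $B$; feasibility requires $y \geq k - x$, which holds whenever $|S| \geq k$ (in particular for any honest $k$-rPDS candidate by Proposition \ref{basebounds}). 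After this removal, $A$ has no PMUs and at least two vertices of $B$ have no PMUs, so the domination step observes only $A \cup B'$ with $|B \setminus B'| \geq 2$. Each vertex of $A$ then has at least two unobserved neighbors in $B$, and each vertex of $B'$ has all neighbors already observed, so no zero-forcing step can fire, and $S \setminus F$ fails to be a PDS.

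The main obstacle is bookkeeping around the boundary value $u_b = -1$, which corresponds to $S$ not already containing a $B$-essential multiset. This case never triggers the $f_a = x$ branch in the backward direction, since $u_b = -1$ and $x + u_b \geq k$ together force $x \geq k + 1 > f_a$, so the case analysis still closes cleanly. In the forward direction one must similarly ensure $y \geq k - x$ for the construction to be realizable, which is where one quietly uses that $S$ is large enough to be a meaningful $k$-rPDS candidate.
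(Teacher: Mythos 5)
Your proposal is correct and follows essentially the same route as the paper: sufficiency by splitting on whether the removal empties side $A$, side $B$, or neither (yielding a $B$-essential, $A$-essential, or $P$-essential multiset via Lemma \ref{maxremove}), and necessity by removing all of one side plus enough PMUs to uncover two vertices on the other. Your explicit handling of the $u_b=-1$ boundary and the impossibility of $f_a=x$ and $f_b=y$ simultaneously matches the paper's treatment of the negative uncover number cases.
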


\begin{proof}
To see why this condition is necessary, note that if $x+u_b < k$, then it is possible to remove $k$ PMUs so that the resulting multiset is no longer a $B$-essential multiset by Lemma \ref{maxremove} and all PMUs are on $B$, meaning that we have found a submultiset that does not contain an essential multiset in violation of Observation \ref{obs:krPDScontainsessential}. Similarly, if $y+u_a<k$, removal of $k$ PMUs results in a multiset that is no longer $A$-essential and can contain no other essential multiset.

For the sufficiency, assume $x+u_b \geq k$ and $y+u_a \geq k$. Note that $S$ must have at least $k+2$ PMUs by Proposition \ref{basebounds}. First consider the case that the uncover numbers are negative. If $u_a=u_b=-1$, then $x+u_b \geq k$ yields $x \geq k+1$. Similarly, $y \geq k+1$. In this case removal of any $k$ PMUs always results in a $P$-essential multiset. If $u_a=-1$ and $u_b \geq 0$, the multiset $S$ contains a $B$-essential multiset, and possibly a $P$-essential multiset if $x>0$. Note that $y+u_a \geq k$ yields $y \geq k+1$. Since $x+u_b \geq k$, removal of $k$ PMUs will always leave at least one PMU on side B, and if all PMUs are removed from side A, $x+u_b\geq k$ ensures that at most one vertex will be have no PMUs on side B, that is, a $B$-essential multiset. In any case, we find that the removal of any $k$ PMUs results in a multiset that contains an essential multiset and so $S$ is a $k$-rPDS.
 
When the uncover numbers are both nonnegative, choose a multiset $F$ of $k$ PMUs to remove from $S$. If there exists at least one PMU remaining on each side of the partition, the resulting multiset is $P$-essential. If all of the PMUs are removed from side A, $k-x$ PMUs must be removed from side B. However, $k-x \leq u_b$, so the multiset $S-F$ is a $B$-essential multiset. Similarly, if all the PMUs are removed from side B, $k-y$ PMUs need to be removed from side A, and the multiset $S-F$ is $A$-essential. In all cases, $S-F$ contains an essential multiset, and so $S$ is a $k$-rPDS.
\end{proof}

Multisets of vertices of $K_{a,b}$ with fixed $\pmus{A}$ and $\pmus{B}$ may have wildly different distributions of the PMUs among the vertices on each side. The maximum number of PMUs that may be removed from a side and maintain an $A$-essential (or $B$-essential) multiset occurs when the two vertices with the smallest number of PMUs contain as many as possible. That is, we want to maximize $u_a$ and $u_b$ to maximize how many PMUs can be removed. One optimal way to distribute the PMUs on each side, both to maximize the uncover number and to simplify the notation in the ensuing argument, is to distribute the given number of PMUs evenly. Note that this definition does not require $\pmus{A}=\pmus{B}$.

\begin{defn}\label{def:evenlydistributed}
A multiset $S$ of vertices of $K_{a,b}$ is called {\em evenly distributed} if the the PMUs on each side of the partition are distributed evenly among the vertices. Formally, suppose $\pmus{A} = x$ and $\pmus{B}= y$. Using the division algorithm, there exists unique nonnegative integers $q_a$, $r_a$, $q_b$ and $r_b$ such that $x=q_aa+r_a$, $0 \leq r_a<a$ and $y=q_bb+r_b$, $0 \leq r_b < b$. The multiset $S$ is \emph{evenly distributed} if $\pmus{x_i} \in \{q_a,q_a+1\}$ for all $x_i \in A$ and $\pmus{y_j} \in \{q_b,q_b+1\}$ for all $y_j \in B$.
\end{defn}

The following lemma shows that only evenly distributed multisets need to be examined when searching for the minimum cardinality of a $k$-rPDS of $K_{a,b}$.

\begin{lem}\label{evenworks}
For a complete bipartite graph $G=K_{a,b}$ with $3 \leq a \leq b$ an integer $k>0$ and a $k$-rPDS $S$ of order $t$, there exists an evenly distributed multiset $S^e$ with the same $\pmus{A}$ and $\pmus{B}$ such that $S^e$ is also a $k$-rPDS.
\end{lem}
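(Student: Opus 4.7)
The plan is to construct $S^e$ explicitly by redistributing the PMUs on each side of the partition as evenly as possible while preserving the totals $x = \pmus{A}$ and $y = \pmus{B}$. Clearly $|S^e| = |S| = t$. Let $u_a, u_b$ denote the uncover numbers of $S$ and $u_a^e, u_b^e$ denote those of $S^e$. By Theorem~\ref{kormore}, to show $S^e$ is a $k$-rPDS it suffices to verify $x + u_b^e \geq k$ and $y + u_a^e \geq k$. Since $S$ itself is a $k$-rPDS, Theorem~\ref{kormore} already gives $x + u_b \geq k$ and $y + u_a \geq k$, so the whole problem reduces to establishing $u_a^e \geq u_a$ and $u_b^e \geq u_b$, i.e., showing that even redistribution never shrinks the uncover number.

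This amounts to the claim that among all distributions of $x$ PMUs on $a$ vertices, the evenly distributed one maximizes the sum $\pmus{x_1}+\pmus{x_2}$ of the two smallest multiplicities (and analogously on side $B$). I would prove this by a short averaging argument. For any distribution $p_1 \leq p_2 \leq \cdots \leq p_a$ with $\sum p_i = x$, we have $a\,p_1 \leq x$, so $p_1 \leq q_a = \lfloor x/a \rfloor$, and $(a-1)p_2 \leq x - p_1$. Combining gives
\[
p_1 + p_2 \;\leq\; p_1 + \frac{x - p_1}{a-1} \;=\; \frac{(a-2)p_1 + x}{a-1},
\]
which is increasing in $p_1$. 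Substituting $p_1 = q_a$ and $x = q_a a + r_a$ yields $p_1 + p_2 \leq 2q_a + \tfrac{r_a}{a-1}$. Since $p_1 + p_2$ is an integer and $\tfrac{r_a}{a-1} < 1$ when $r_a \leq a-2$ while $\tfrac{r_a}{a-1} = 1$ when $r_a = a-1$, we conclude $p_1+p_2 \leq 2q_a$ if $r_a \leq a-2$ and $p_1+p_2 \leq 2q_a + 1$ if $r_a = a-1$. A direct inspection of the evenly distributed multiset shows these bounds are attained there (either two vertices both have $q_a$ PMUs, or a single vertex has $q_a$ and the other $a-1$ have $q_a+1$), so $u_a^e \geq u_a$. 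The identical argument on side $B$ gives $u_b^e \geq u_b$, and Theorem~\ref{kormore} then certifies $S^e$ as a $k$-rPDS.

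The main obstacle, to the extent there is one, is the small case distinction between $r_a \leq a-2$ and $r_a = a-1$, which is forced by whether the evenly distributed multiset has at least two vertices attaining the minimum value $q_a$ or only one. Once Theorem~\ref{kormore} is invoked, no further reasoning about the power domination process itself is needed---the lemma reduces entirely to the convexity/pigeonhole-style inequality above, which says intuitively that spreading PMUs out on one side can only increase the number of PMUs one is forced to delete before emptying a vertex.
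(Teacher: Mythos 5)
Your proposal is correct and follows essentially the same route as the paper: both reduce the lemma, via Theorem~\ref{kormore}, to showing that even redistribution cannot decrease the uncover numbers, i.e.\ that the evenly distributed multiset maximizes $\pmus{x_1}+\pmus{x_2}$ on each side. The only difference is cosmetic --- you establish $u_a^e \geq u_a$ by a direct averaging inequality with the case split $r_a \leq a-2$ versus $r_a = a-1$, whereas the paper proves the same inequality by contradiction with a Pigeonhole argument.
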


\begin{proof}
Let $G=K_{a,b}$ with vertices partitioned into sets $A$ and $B$ with $|A|=a$ and $|B|=b$, $S$ with $|S|=t$, $\pmus{A} = x$ and $\pmus{B}= y$ with $x+y=t$. Without loss of generality, label the vertices on each side of the graph in increasing order of the number of PMUs on each vertex in $S$. Define $S^e$ to be the evenly distributed multiset with $x$ PMUs on side $A$ and $y$ PMUs on side $B$. 

If $x \geq k+1$ and $y \geq k+1$ the distribution of the PMUs does not matter, since removing any multiset of size $k$ will result in a multiset containing a $P$-essential multiset, so $S^e$ is also a $k$-rPDS.

If $x \leq k$ or $y \leq k$, $S$ must contain an $A$-essential or a $B$-essential multiset. (If both sides contain $k$ or fewer PMUs, $S$ must contain both an $A$-essential and a $B$-essential multiset.) 

There exist nonnegative integers $q_a$, $r_a$, $q_b$ and $r_b$ such that $x=q_aa+r_a$, $0 \leq r_a<a$ and $y=q_bb+r_b$, $0 \leq r_b < b$. In the evenly distributed multiset $S^e$, the maximum number of PMUs that may be removed from $X$ so that the resultant set still contains an $A$-essential multiset is 
\[
    u_a^e =
    \left\{\begin{array}{ll}
    2q_a    & \text{if $r_a=a-1$}\\
    2q_a-1  & \text{if $r_a < a-1$}
    \end{array} \right.
\]
Similarly,
\[
    u_b^e =
    \left\{\begin{array}{ll}
    2q_b            & \text{if $r_b=b-1$}\\
    2q_b-1          &\text{if $r_b < b-1$}
    \end{array} \right.
\]
 We will now show that $u_a \leq u_a^e$ and $u_b \leq u_b^e$. First consider the case that $r_a<a-1$. Suppose for contradiction that $u_a > u_a^e$. By the definition of the uncover number, 
 \[\pmusset{x_1}{S} +\pmusset{x_2}{S}-1 =u_a > u_a^e = 2q_a-1\] 
 and so $\pmusset{x_1}{S}+\pmusset{x_2}{S} > 2q_a$. Then by the Pigeonhole Principle and the ordering of the vertices, $\pmusset{x_2}{S} > q_a$ and so each of the other vertices $x_i\in X$ for $i>2$ has $\pmusset{x_i}{S} \geq q_a+1$. But then 
 \[\pmus{A} = x \geq 2q_a+1 +(a-2)(q_a+1) \geq aq_a+(a+1)>x,\]
 a contradiction. A similar argument holds for the case that $r_a=a-1$, and a symmetric argument also shows $u_b \leq u_b^e$.  Therefore, $y+u_a^e \geq k$ and $x+u_b^e \geq k$. Thus $S^e$ is a $k$-rPDS by Theorem \ref{kormore}.
\end{proof}

\subsection{The Parameter Sequence}

Let $G=K_{a,b}$ for a specific $3 \leq a \leq b$. For all $k \geq 0$, define $(p_k)$ to be the sequence of positive integers so that $p_k=\gpk{G}$. In this section, we will construct this sequence recursively from the base case that $p_0=\gp{K_{a,b}}=2$ for any complete bipartite graph with $3\leq a \leq b$. Proposition \ref{prop:incr} and Lemma \ref{plusgammap} combine to yield the following proposition.

\begin{prop}\label{OneorTwo}
Given the complete graph $K_{a,b}$ with $3 \leq a \leq b$ and an integer $k \geq 0$, \[\gpk{K_{a,b}} + 1 \leq \gpkplus{K_{a,b}} \leq \gpk{K_{a,b}} + 2. \]
\end{prop}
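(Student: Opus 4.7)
The plan is to derive both inequalities as immediate corollaries of two results already established in Section~\ref{sec:bounds}, once the value $\gp{K_{a,b}} = 2$ (valid for $3 \leq a \leq b$, as recalled at the start of the subsection on essential multisets) is plugged in. There is no real new content beyond this substitution.

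First, I would handle the lower bound $\gpk{K_{a,b}} + 1 \leq \gpkplus{K_{a,b}}$ by directly invoking Proposition~\ref{prop:incr} with $G = K_{a,b}$. That proposition is stated for every graph and every $k \geq 0$, so no specialization beyond naming the graph is required. Next, for the upper bound $\gpkplus{K_{a,b}} \leq \gpk{K_{a,b}} + 2$, I would apply Lemma~\ref{plusgammap} to $G = K_{a,b}$, which gives $\gpkplus{K_{a,b}} \leq \gpk{K_{a,b}} + \gp{K_{a,b}}$, and then substitute $\gp{K_{a,b}} = 2$.

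The only mild wrinkle is the boundary case $k = 0$, because Lemma~\ref{plusgammap} is stated for positive $k$. I would dispatch it in one line by exhibiting an explicit $1$-rPDS of size $4$: place two PMUs on a single vertex of $A$ and two PMUs on a single vertex of $B$; removing any one PMU leaves at least one PMU on each side of the partition, so the underlying set is the $P$-essential power dominating set $\{x_i, y_j\}$. Alternatively, one can simply observe that the proof of Lemma~\ref{plusgammap} goes through verbatim when $k = 0$ (removing the single PMU from $C = S \cup D$ either eliminates $S$ and leaves $D$, or eliminates a PMU in $D$ and leaves $S$, both of which are power dominating sets).

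I do not anticipate any genuine obstacle; the proposition really is just the substitution $\gp{K_{a,b}} = 2$ into two previously proven general bounds. The only thing worth flagging in the write-up is the hypothesis $3 \leq a \leq b$, which is what forces $\gp{K_{a,b}} = 2$ rather than $1$ (the latter occurring when $\min\{a,b\} \in \{1,2\}$, a case already addressed immediately before Subsection~\ref{sec:completebipartite}).
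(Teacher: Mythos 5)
Your proposal is correct and matches the paper's own (one-line) justification, which likewise combines Proposition~\ref{prop:incr} for the lower bound with Lemma~\ref{plusgammap} and $\gp{K_{a,b}}=2$ for the upper bound. Your extra care with the $k=0$ boundary case of Lemma~\ref{plusgammap} is a reasonable tidying-up that the paper glosses over, but it does not constitute a different approach.
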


By Proposition \ref{OneorTwo}, $p_{k+1}=p_k+1$ or $p_{k+1}=p_k+2$. This leads to the main focus of this section:

\begin{quest}\label{quest:OneorTwo}
For a complete bipartite graph $K_{a,b}$ with $3\leq a \leq b$, when is the addition of two PMUs necessary when moving from a $k$-rPDS to a $k+1$-rPDS?
\end{quest}

In order to answer Question \ref{quest:OneorTwo}, we will need the following definition.

\begin{defn}\label{AddTwoCond}
Given integers $3 \leq a \leq b$, $k \geq 0$, and $p_k = \gpk{K_{a,b}}$, the pair $(k,p_k)$ is said to satisfy the {\em Jump Condition} for $K_{a,b}$ if there exists nonnegative integers $q_a$, $r_a$, $q_b$ and $r_b$ for which $p_k=q_aa+r_a + q_bb+r_b$  such that the following relationships hold:
\begin{enumerate}
    \item $q_aa+r_a+2q_b-1=k$
    \item $q_bb+r_b+2q_a-1=k$
    \item $r_a \leq a-3$
    \item $r_b \leq b-3$
\end{enumerate}
\end{defn}

Observe that the requirement that $q_aa+r_a + q_bb+r_b = p_k$ defines a minimum evenly distributed $k$-rPDS of $K_{a,b}$, denoted $S^e$, with $q_aa+r_a$ PMUs on side A, and $q_bb+r_b$ PMUs on side B. We examine this set in Lemma \ref{uniqueK}.

\begin{lem}\label{uniqueK}
If the pair $(k, p_k)$ satisfies the Jump Condition, then the evenly distributed multiset $S^e$ with $\pmus{A}=q_aa+r_a$ and $\pmus{B}=q_bb+r_b$ is the unique evenly distributed minimum $k$-rPDS.
\end{lem}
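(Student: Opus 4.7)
The plan is to use Theorem \ref{kormore} as the single characterization tool throughout: a multiset $S$ is a $k$-rPDS of $K_{a,b}$ exactly when $\pmus{A} + u_b \ge k$ and $\pmus{B} + u_a \ge k$. I would split the argument into (i) verifying that $S^e$ itself is a $k$-rPDS, and (ii) ruling out every other evenly distributed multiset of size $p_k$.

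For (i), write $(x, y) = (q_a a + r_a,\ q_b b + r_b)$ for the side-sums of $S^e$. The closed-form expressions for $u_a^e$ and $u_b^e$ derived in the proof of Lemma \ref{evenworks}, combined with the Jump Condition hypotheses $r_a \le a - 3 < a - 1$ and $r_b \le b - 3 < b - 1$, collapse to $u_a^e = 2q_a - 1$ and $u_b^e = 2q_b - 1$. Conditions (1) and (2) of the Jump Condition then read exactly $x + u_b^e = k$ and $y + u_a^e = k$, so Theorem \ref{kormore} certifies $S^e$ as a $k$-rPDS, and the size is $p_k$ by construction.

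For (ii), let $S'$ be any evenly distributed $k$-rPDS of size $p_k$ with side-sums $x' + y' = p_k$. Because even distribution determines the multiset given the side-sums, it suffices to derive a contradiction from $(x', y') \ne (x, y)$. By symmetry, assume $x' = x + t$ and $y' = y - t$ with $t \ge 1$. The key computational observation I would establish is that, viewing $u_a$ as a function of the side-$A$ sum for evenly distributed multisets, a three-case check on the Lemma \ref{evenworks} formula gives $u_a(z + 1) - u_a(z) \in \{0, 1\}$ for every $z \ge 0$, and this increment is $0$ precisely when the source residue satisfies $r_a(z) \le a - 3$. Since the starting residue $r_a \le a - 3$ by hypothesis, the very first step $x \to x + 1$ contributes $0$; together with the at-most-$1$ bound on later steps this gives $u_a(x + t) - u_a(x) \le t - 1$.

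Plugging in, $y' + u_a(x') = (y + u_a(x)) - t + (u_a(x + t) - u_a(x)) \le k + (t - 1) - t = k - 1 < k$, contradicting Theorem \ref{kormore}; hence $S'$ is not a $k$-rPDS. The case $x' < x$ is handled symmetrically, using the parallel hypothesis $r_b \le b - 3$. The main obstacle is the discrete case check for the increment $u_a(z+1) - u_a(z)$, and recognizing that the seemingly technical slack bounds $r_a \le a - 3$ and $r_b \le b - 3$ in the Jump Condition are exactly what force the first step of any rebalancing to waste a unit and produce a strict violation of the Theorem \ref{kormore} bounds.
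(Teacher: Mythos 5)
Your proof is correct and follows essentially the same route as the paper's: both arguments rest on the characterization in Theorem \ref{kormore}, use the slack conditions $r_a \le a-3$ and $r_b \le b-3$ to show that the first PMU shifted to the other side fails to increase the relevant uncover number, and then extend to larger shifts by a monotonicity/increment argument. Your version is slightly more explicit than the paper's (you verify directly that $S^e$ is a $k$-rPDS and give the exact per-step increment of the uncover number, where the paper only treats $w=1$ and then asserts that $x-w+u_b^w$ is nonincreasing), but the underlying idea is identical.
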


\begin{proof}

The multiset $S^e$ has $x=q_aa+r_a$ PMUs on side A, and $y=q_bb+r_b$ PMUs on side B. To show that $S^e$ is the unique evenly distributed minimum $k$-rPDS, it necessary to show that no other multiset $C$ with $|C|=p_k$ and a different number of PMUs on side A and side B is a $k$-rPDS. Let $C^w$ be an evenly distributed multiset with $x-w=q_aa+r_a-w$ PMUs on side A, and $y+w=q_bb+r_b+w$ PMUs on side B, and let $u_a^w$ and $u_b^w$ be the respective uncover numbers for $C^w$.

Consider the case when $w=1$, that is, the multiset $C^1$ with 1 fewer PMU on side $A$ and 1 more PMU on side $B$. Then the condition that $r_b \leq b-3$ implies that there are at least 3 vertices so that $\pmusset{y_1}{S^e}=\pmusset{y_2}{S^e}=\pmusset{y_3}{S^e}= q_b$, and so by moving only 1 PMU to side $B$, $\pmusset{y_1}{S^e}=\pmusset{y_2}{S^e}=\pmusset{y_1}{C^1}=\pmusset{y_2}{C^1}$ and so $u_b^1=u_b$. However, $\pmusset{A}{C^1}+u_b^1 = (x-1) + u_b = k-1$, so $C^1$ is not a $k$-rPDS by Theorem \ref{kormore}. 

The maximum number of PMUs that can be removed from $C^w$ and retain a $B$-essential multiset is $x-w+u_b^w$, and this function is a nonincreasing function of $w$. To see this, note that if $x-w+u_b^w=m_w$, then removing one more PMU from side A and adding it to side B can only increase $u_b$ by at most one. Thus $m_{w+1}=x-(w+1)+u_b^{w+1} \leq m_w$. Since $m_1=k-1$, shifting more PMUs to side B results in a multiset that is not a $k$-rPDS. A symmetric argument shows that shifting PMUs to side A results in a multiset $C^{-w}$ that is not a $k$-rPDS. Thus, $S^e$ is the unique evenly distributed multiset which is a minimum $k$-rPDS.
\end{proof}

As an implication of Lemma \ref{uniqueK}, any multiset $S$ that is a minimum $k$-PDS for $K_{a,b}$ when $(k,p_k)$ satisfies the Jump Condition must have $\pmus{A}=q_aa+r_a$, $\pmus{B}=q_bb+r_b$, $u_a=2q_a-1$, and $u_b=2q_b-1$ \emph{even if the PMUs are not distributed evenly}. 

We are now able to answer Question \ref{quest:OneorTwo} in Theorem \ref{Jumpadds2}.

\begin{thm}\label{Jumpadds2}
Given $G=K_{a,b}$ with $3 \leq a \leq b$, $k \geq 0$ and $p_k=\gpk{G}$, then
\[
    p_{k+1}= 
    \left\{\begin{array}{ll}
    p_k+2            & \text{if $(k,p_k)$ satisfies the Jump Condition}\\
    p_k+1 &\text{otherwise}.
    \end{array} \right.
\]
\end{thm}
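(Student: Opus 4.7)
The plan is to prove both directions separately, using the fact (Proposition \ref{OneorTwo}) that $p_{k+1}\in\{p_k+1,p_k+2\}$, so it suffices to decide in each instance whether the jump is $1$ or $2$.

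For the forward direction (Jump Condition implies $p_{k+1}=p_k+2$), I would argue by contradiction: assume $T$ is a $(k+1)$-rPDS with $|T|=p_k+1$. By Observation \ref{removeone}, removing any single PMU of $T$ yields a $k$-rPDS of size $p_k$, hence a minimum $k$-rPDS. By the implication of Lemma \ref{uniqueK} stated in the text, under the Jump Condition every minimum $k$-rPDS has $\pmus{A}=q_aa+r_a$ and $\pmus{B}=q_bb+r_b$. If $T$ has PMUs on both sides, then removing a PMU from side $A$ forces $\pmusset{A}{T}=q_aa+r_a+1$, while removing one from side $B$ forces $\pmusset{A}{T}=q_aa+r_a$, an immediate contradiction. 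In the remaining case $\pmusset{B}{T}=0$, the resulting $k$-rPDS also has $\pmus{B}=0$, requiring $q_bb+r_b=0$; a short arithmetic check rules this out, since setting $q_b=r_b=0$ in the two Jump Condition equalities yields $q_aa+r_a=2q_a$, which has no solution with $a\geq 3$ and $k\geq 0$. The symmetric case $\pmusset{A}{T}=0$ is analogous.

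For the backward direction (not Jump Condition implies $p_{k+1}\leq p_k+1$), I would take an evenly distributed minimum $k$-rPDS $S^e$ (exists by Lemma \ref{evenworks}) with parameters $(q_a,r_a,q_b,r_b)$ and explicitly build a $(k+1)$-rPDS of size $p_k+1$ by adding a single PMU, evenly, on the appropriate side. The key technical computation, a direct case split on $r_a$, shows that adding one PMU evenly to side $A$ of $S^e$ gives $u_a'=u_a+1$ when $r_a\in\{a-2,a-1\}$ and $u_a'=u_a$ when $r_a\leq a-3$, and symmetrically for side $B$. I then case-split by which of the four lines of the Jump Condition fails for the parameters of $S^e$: if condition $3$ fails, adding to $A$ supplies the needed increment in $u_a$; if condition $4$ fails, add to $B$; if $3$ and $4$ both hold but condition $1$ fails, then $x+u_b\geq k+1$ strictly (from Theorem \ref{kormore} plus $x+u_b\neq k$), so adding to $B$ keeps both inequalities of Theorem \ref{kormore} satisfied at level $k+1$; symmetrically if condition $2$ fails. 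In every subcase, verifying Theorem \ref{kormore} for the extended multiset is immediate.

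The main obstacle will be the backward case analysis, which requires simultaneously tracking how the uncover numbers respond to adding one PMU and which slack each failure of the Jump Condition provides in the inequalities of Theorem \ref{kormore}. The dichotomy $r_a\leq a-3$ versus $r_a\in\{a-2,a-1\}$—precisely condition $3$ of the Jump Condition—is the pivot, cleanly separating the situations in which the uncover number jumps from those in which it does not, and the analogous statement for $r_b$ governs condition $4$.
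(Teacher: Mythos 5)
Your proposal is correct, and the backward direction (no Jump Condition implies $p_{k+1}=p_k+1$) is essentially the paper's argument: pick an evenly distributed minimum $k$-rPDS, case-split on which line of the Jump Condition fails for its parameters, and add one PMU to the side that either increments the relevant uncover number (when condition 3 or 4 fails, i.e.\ $r_a\in\{a-2,a-1\}$ or $r_b\in\{b-2,b-1\}$) or exploits the strict slack $x+u_b\geq k+1$ or $y+u_a\geq k+1$ (when condition 1 or 2 fails), then verify both inequalities of Theorem \ref{kormore} at level $k+1$. Your forward direction, however, takes a genuinely different route. The paper passes from a hypothetical minimum $(k+1)$-rPDS $T$ of size $p_k+1$ to its evenly distributed counterpart $T^e$ via Lemma \ref{evenworks}, removes one PMU so as to stay evenly distributed, identifies the result with $S^e$ by Lemma \ref{uniqueK}, and then shows that $S^e$ plus one PMU on either side fails Theorem \ref{kormore} because conditions 3 and 4 keep the uncover numbers from increasing. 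You instead work with $T$ directly: remove a PMU from side $A$ and, separately, from side $B$, note that both results are minimum $k$-rPDSs whose side counts are forced by the remark following Lemma \ref{uniqueK} to equal $q_aa+r_a$ and $q_bb+r_b$, and derive the contradiction $\pmusset{A}{T}-1=\pmusset{A}{T}$; the degenerate one-sided cases are killed by the arithmetic $q_aa+r_a=2q_a$, impossible for $a\geq 3$, $k\geq 0$. Your version is shorter and avoids both the detour through $T^e$ and the ``add a PMU back to $S^e$'' uncover-number analysis, at the cost of leaning more heavily on the unproved-but-stated consequence of Lemma \ref{uniqueK} for non-evenly-distributed minimum sets (which does follow from Lemmas \ref{evenworks} and \ref{uniqueK}, so this is a legitimate dependence, not a gap).
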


\begin{proof}
First assume that $(k,p_k)$ does not satisfy the Jump Condition. Let $S$ be a minimum evenly distributed $k$-rPDS with $x=q_aa+r_a$ vertices on side A, and $y=q_bb+r_b$ vertices on side B. 

In the case that the Jump Condition fails due to part 1 or 2, without loss of generality, assume $y+u_a \neq k$. Since $S$ is a $k$-rPDS, Theorem \ref{kormore} implies $y+u_a \geq k$ and $x+b_b\geq k$. Then we obtain $y+u_a\geq k+1$. Add one PMU to side A to create the multiset $S^{+1}$. Then $x+1+u_b \geq k+1$. By Theorem \ref{kormore}, this implies $S^{+1}$ is $(k+1)$-robust, and thus $p_{k+1}=p_k+1$. 

In the case that the Jump Condition fails due to part 3 or 4, without loss of generality, assume $a-2 \leq r_a < a$. Create the multiset $S^{+1}$ by adding one PMU to a vertex in $A$ with $q_a$ PMUs. Then $u_a^{+1}=u_a+1$ and so $y+u_a^{+1} \geq k+1$. Also, $(x+1) + u_b \geq k+1$, implying $S^{+1}$ is a $(k+1)$-rPDS of order $p_k+1$.

If $(k,p_k)$ satisfies the Jump Condition, let $S^e$ be the unique evenly distributed minimum $k$-rPDS defined by the Jump Condition as shown in Lemma \ref{uniqueK}. Suppose for eventual contradiction that $T$ is a minimum $(k+1)$-rPDS of order $p_k+1$ with $x$ PMUs on side A and $y$ PMUs on side B. By Lemma \ref{evenworks}, there exists an evenly distributed $(k+1)$-rPDS, $T^e$, with the same number of PMUs on each side. Remove one PMU from $T^e$ in such a way as the resulting multiset $T^-$ remains evenly distributed. Since $T^e$ is a $k+1$-rPDS, $T^-$ must be a $k$-rPDS by Observation \ref{removeone}. Then as $T^-$ is evenly distributed, $S^e$ is evenly distributed, and $(k,p_k)$ satisfies the Jump Condition, by Lemma \ref{uniqueK}, $T^-$=$S^e$. Adding one vertex to either side of $S^e$ does not change the uncover numbers since parts 3 and 4 of the Jump Condition imply there are at least 3 vertices with $q_a$ or $q_b$ PMUs. If the extra vertex is added to side A, then $y+u_a=k$ and $x+1+u_b=k+1$, and so the multiset $T^e$ is not $k+1$-robust, contradicting the assumption. A similar result holds if the extra PMU is added to side B. Therefore, a $(k+1)$-rPDS of order $p_k+1$ is impossible, and $p_{k+1}=p_k+2$.
\end{proof}

\subsection{Examples of the Sequence Construction}
\subsubsection{Balanced Bipartite Graphs}

The complete bipartite graph $K_{a,b}$ is called \emph{balanced} if $a=b$. In this section a closed formula for the sequence in Theorem \ref{Jumpadds2} is calculated for balanced complete bipartite graphs.

\begin{thm}\label{thm:Knn}
Given the complete bipartite graph $K_{n,n}$ with $n \geq 3$, 
\[
\gpk{K_{n,n}} =
            \begin{cases}
                2(k+1) -4 \left\lfloor \frac{k}{n+2} \right\rfloor & k \equiv 0,1,\dots,n-3 \pmod{n+2} \\
                k+2 +(n-2) \left\lfloor \frac{k}{n+2}\right\rfloor  & \text{otherwise}
            \end{cases}
\]
\end{thm}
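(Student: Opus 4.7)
The plan is to prove the theorem by strong induction on $k$, using Theorem \ref{Jumpadds2} as the engine and exploiting the symmetry $a = b = n$ of $K_{n,n}$ to reduce the Jump Condition to a simple arithmetic statement. The base case is immediate: $p_0 = \gp{K_{n,n}} = 2$ agrees with the formula since $0 \in \{0, 1, \ldots, n-3\} \pmod{n+2}$ and the Case~1 expression evaluates to $2$.

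The key structural step is to specialize the Jump Condition (Definition \ref{AddTwoCond}) to the balanced case. Subtracting conditions (1) and (2) gives $(q_a - q_b)(n-2) = r_b - r_a$, and the range restrictions $r_a, r_b \leq n-3$ force $|r_b - r_a| \leq n-3 < n-2$, which forces $q_a = q_b$ and $r_a = r_b$. Writing $q = q_a$ and $r = r_a$, the Jump Condition for $K_{n,n}$ collapses to the pair of requirements $q(n+2) + r = k+1$ with $r \in \{0,1,\ldots,n-3\}$, together with $p_k = 2(qn + r)$. Equivalently, taking the unique division $k + 1 = q(n+2) + r'$ with $r' \in \{0, 1, \ldots, n+1\}$, the Jump Condition holds at $(k, p_k)$ precisely when $r' \leq n-3$ and the inductive value satisfies $p_k = 2(qn + r')$.

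The inductive step then unfolds by case analysis on $j := k \bmod (n+2)$. For $j \in \{0, 1, \ldots, n-4\}$ one has $r' = j+1 \in \{1, \ldots, n-3\}$, and the Case~1 formula gives $p_k = 2(k+1) - 4q = 2(qn + r')$, so the Jump Condition holds and Theorem \ref{Jumpadds2} yields $p_{k+1} = p_k + 2$; an analogous check covers the wrap case $j = n+1$, where $r' = 0$ and one must verify that the Case~2 formula at residue $n+1$ produces $p_k = 2qn$ with the incremented quotient. For $j \in \{n-3, n-2, n-1, n\}$, the value $r' \in \{n-2, n-1, n, n+1\}$ violates the range restriction, so the Jump Condition fails and $p_{k+1} = p_k + 1$. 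In each case the resulting $p_{k+1}$ is then matched against the piecewise formula at the residue of $k+1$.

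The main obstacle is the bookkeeping at the two transitions of the piecewise formula per period of $n+2$: the failure of the Jump Condition at the Case~1--to--Case~2 boundary (the step from residue $n-3$ to $n-2$), and its return at the Case~2--to--Case~1 wrap (the step from residue $n+1$ to $0$, where $\lfloor k/(n+2) \rfloor$ increments). At each such boundary one must verify that both the range condition on $r'$ and the arithmetic identity $p_k = 2(qn+r')$ line up with the appropriate case of the formula; once this is done, the accounting confirms $n-2$ jumps of size $2$ and four increments of size $1$ per period, summing to the total increase of $2n$ per period $n+2$ that is baked into the closed form.
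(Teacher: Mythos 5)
Your proposal is correct and follows essentially the same route as the paper: both reduce the Jump Condition for $K_{n,n}$ by subtracting its two defining equations to force $q_a=q_b$ and $r_a=r_b$, collapse it to $k+1=(n+2)q+r$ with $0\leq r\leq n-3$, and then account for $n-2$ jumps of size $2$ and four increments of size $1$ in each period of length $n+2$. Your explicit inductive check that the identity $p_k=2(qn+r')$ holds at each step is a detail the paper's proof leaves implicit, but the underlying argument is the same.
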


\begin{proof}
Given an integer $n \geq 3$, the Jump Condition for the graph $K_{n,n}$ requires that integers $q_a$, $r_a$, $q_b$ and $r_b$ be found such that 
\begin{enumerate}
\item $q_a \cdot n +r_a + 2q_b -1 =k$
\item $q_b \cdot n + r_b + 2q_a -1 = k$
\item $0 \leq r_a \leq n-3$ and $0 \leq r_b \leq n-3$.
\end{enumerate}
Combining Part 1 and Part 2 and simplifying yields
\[ (n-2)(q_a - q_b) = r_b - r_a.\]
Since $n-2$ is a factor of $r_b - r_a \leq n-3$, this implies $r_b-r_a=0$ and so $q_a-q_b = 0$. Substituting $q=q_a=q_b$ and $r=r_a=r_b$, the Jump Condition requires 
\[ k = (n+2)q +r-1. \]
For $k \equiv -1, 0, 1, \dots, n-4 \pmod{n+2}$, increasing from a $k$-rPDS to a $(k+1)$-rPDS requires 2 extra PMUs. In other words, an additional PMU is needed for $k \equiv 0, 1, \dots, n-3  \pmod{n+2}$. The formula follows from counting the number of times an extra PMU are not needed in the first case, or counting how many extra PMUs are needed in the second case.
\end{proof}

\begin{cor}\label{cor:K33}
When $n = 3$, $k \equiv 0 \pmod{5}$ implies $k -4 \frac{k}{5} = \frac{k}{5}$, so Theorem \ref{thm:Knn} simplifies to 
\[ \gpk{K_{3,3}}=k+2+\left\lfloor\frac{k}{5}\right\rfloor. \]
\end{cor}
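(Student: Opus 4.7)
The plan is to specialize Theorem \ref{thm:Knn} to $n=3$ and verify that both branches of its piecewise formula collapse to the single expression $k+2+\lfloor k/5\rfloor$. With $n=3$, the modulus becomes $n+2=5$, and the first-case range $k \equiv 0,1,\ldots,n-3 \pmod{n+2}$ degenerates to the single residue class $k \equiv 0 \pmod 5$ (since $n-3=0$). So the only case split in the resulting formula is between $k \equiv 0 \pmod 5$ and $k \not\equiv 0 \pmod 5$.

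For $k \not\equiv 0 \pmod 5$, the second branch of Theorem \ref{thm:Knn} gives
\[
\gpk{K_{3,3}} = k + 2 + (n-2)\left\lfloor \frac{k}{n+2}\right\rfloor = k + 2 + \left\lfloor \frac{k}{5}\right\rfloor
\]
immediately, since $n-2=1$. For $k \equiv 0 \pmod 5$, I would use that $\lfloor k/5 \rfloor = k/5$ is an exact integer and rewrite the first branch as
\[
2(k+1) - 4\left\lfloor \frac{k}{5}\right\rfloor = 2k + 2 - \frac{4k}{5} = k + \left(k - \frac{4k}{5}\right) + 2 = k + \frac{k}{5} + 2 = k + 2 + \left\lfloor \frac{k}{5}\right\rfloor,
\]
which is exactly the identity $k - 4(k/5) = k/5$ flagged in the statement. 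Since both branches produce the same formula, the claimed closed form follows.

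This is a direct arithmetic consequence of Theorem \ref{thm:Knn}, so there is no genuine obstacle: the only things to be careful about are confirming that for $n=3$ the first-case residue range collapses to a single class (because $n-3=0$) and that the integrality of $k/5$ on that class allows the term $4\lfloor k/5\rfloor$ to be manipulated exactly without any rounding error.
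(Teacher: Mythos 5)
Your proposal is correct and follows exactly the same route as the paper, which treats the corollary as a direct arithmetic specialization of Theorem \ref{thm:Knn}: the first residue class collapses to $k\equiv 0 \pmod 5$ and the identity $k-4\frac{k}{5}=\frac{k}{5}$ merges the two branches. Your verification that $n-2=1$ makes the second branch match as well is the same (implicit) check the paper relies on.
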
 

We demonstrate $k$-rPDSs of $K_{3,3}$ for small values of $k$ in Figure \ref{fig:k33}.

 \begin{figure}[hbtp]
 \begin{center}
 \includegraphics[scale=1]{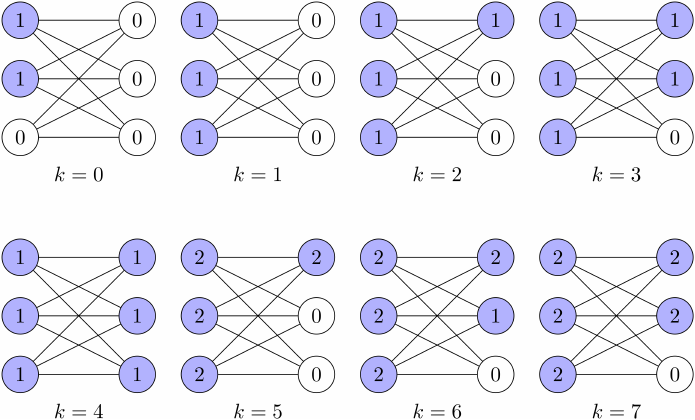}
 \end{center}
    \captionsetup{width=.9\linewidth}
 \caption{Minimum $k$-rPDS for $K_{3,3}$ for $k=0,1,\ldots,7$}
 \label{fig:k33}
 \end{figure}
 
\subsubsection{An Unbalanced Bipartite Graph Example}

When the complete bipartite graph is not balanced, Theorem \ref{Jumpadds2} must be applied to a specific graphs. The following is an example for small values of $a$ and $b$.

\begin{ex}\label{K34}
Let $G=K_{3,4}$. The Jump Condition for a specific $(k, p_k)$ requires that for $q_aa+r_a+q_bb+r_b=p_k$, the integers also satisfy
\begin{enumerate}
\item $3q_a+r_a+2q_b-1=4q_b+r_b+2q_a-1$
\item $3q_a+r_a+2q_b-1=k$
\item $r_a=0$
\item $0 \leq r_b \leq 1$
\end{enumerate}
Simplifying part 1 with $r_a=0$ yields $q_a=2q_b+r_b$. Substituting this for $q_a$ into the part 2 yields to $8q_b+3r_b-1=k$. When $r_b=0$, $k=8q_b-1$. When $r_b=1$, we obtain $8q_b+3-1= 8q_b+2=k$. Thus $\gpk{K_{3,4}}$ requires $k+2$ PMUs by Proposition \ref{basebounds} and an additional PMU for each $k \equiv 0,3 \pmod 8$. A closed form expression for $\gpk{G}$ is the following
\[\gpk{K_{3,4}} =
            \begin{cases}
                k+2 +2 \left\lfloor \frac{k}{8} \right\rfloor & k \equiv 0,1,2 \pmod{8} \\
                k+3 +2 \left\lfloor \frac{k}{8}\right\rfloor  & \text{otherwise}
            \end{cases}.\]
In Figure \ref{fig:K34ex}, we give $k$-rPDSs for $k=6,7,8$. The $k=6$ case is the first to improve the results from Theorem \ref{sjbound} and the change from $k=7$ to $k=8$ demonstrates when the Jump Condition is satisfied.           
\end{ex}

 \begin{figure}[htbp]
 \begin{center}
 \includegraphics[scale=1]{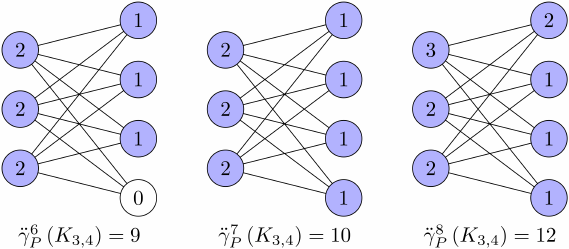}
 \end{center}
    \captionsetup{width=.9\linewidth}
 \caption{A demonstration of minimum $k$-rPDSs for $K_{3,4}$ when $k=6,7,8$}
 \label{fig:K34ex}
 \end{figure}

\section{Block Graphs}\label{sec:blocks}

For a connected graph $G$, a vertex $x \in V(G)$ is called a \emph{cut--vertex} if when we remove $x$ and its corresponding edges the resultant graph, denoted by $G-x$, is disconnected. A maximal connected induced subgraph of $G$ without a cut--vertex is called a \emph{block}.  The graph $G$ is called a \emph{block graph} if every block is complete. Block graphs are a superclass of trees. 

A \emph{rooted tree} is a tree in which one vertex designated as the \textit{root}. Suppose two vertices $u$ and $w$ are in a rooted tree with root $r$. If $u$ is on the $r-w$ path, we say that $w$ is a \textit{descendant} of $u$ and $u$ is an \textit{ancestor} of $w$. If $u$ and $v$ are also neighbors, we say that $v$ is a \emph{child} of $u$.

Let $G$ be a block graph with $t$ blocks $BK_1,BK_2 \dots BK_t$ and $p$ cut--vertices $x_1, x_2, \dots, x_p$. For each block $BK_i$ of $G$, define $B_i = \{w_j:w_j \in BK_i \text{ and $w_j$ is not a cut--vertex}\}$. In \cite{xksz06}, Xu et al. defined the \emph{refined cut--tree of $G$} to be the tree with vertex set $V^B=\{w_1^B,w_2^B,\dots,w_t^B,x_1,x_2,\dots,x_p\}$ where $w_i^B$ is a vertex representing the set $B_i$, called a \emph{block--vertex}, and the edge set is $E^B=\{(w_i^B,x_j):x_j\in BK_i\}$. 
Block--vertices will be referred to by their cardinality.
\begin{enumerate}
\item $w_i^B$ is called an \emph{empty--block} if $B_i=\emptyset$.
\item $w_i^B$ is called a \emph{one--block} if $|B_i|=1$.
\item $w_i^B$ is called a \emph{multi--block} if $|B_i| \geq 2$.
\end{enumerate}

In Figure \ref{fig:blockgraph}, we give an example of a block graph labelled in both $BK_i$ notation and $B_i$ notation with each of the blocks circled. Figure \ref{fig:refinedcuttree} shows the refined cut--tree of the graph in Figure \ref{fig:blockgraph}.

 \begin{figure}[hbtp]
 \begin{center}
 \includegraphics[scale=1]{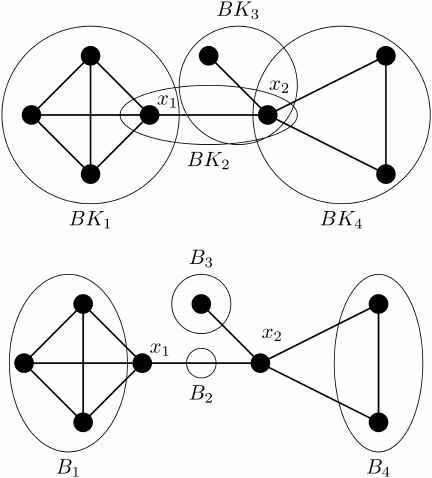}
 \end{center}
    \captionsetup{width=.9\linewidth}
 \caption{A block graph, labelled in both $BK_i$ notation and $B_i$ notation. Observe that $BK_2$ is made of only cut vertices and so the corresponding $B_2$ is an empty-block. Block $B_3$ is a one-block. Blocks $B_1$ and $B_3$ are multi-blocks}
 \label{fig:blockgraph}
 \end{figure}

 \begin{figure}[hbtp]
 \begin{center}
 \includegraphics[scale=1]{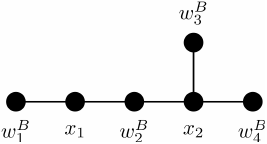}
 \end{center}
    \captionsetup{width=.9\linewidth}
 \caption{The refined cut-tree for the block graph in Figure \ref{fig:blockgraph}}
 \label{fig:refinedcuttree}
 \end{figure}

In \cite{xksz06}, Xu et. al. defined a liner--time algorithm on the refined cut--tree to find a minimum power dominating set of a block graph. Their algorithm processes the vertices in the refined cut--tree one at a time from the leaves to the root, but it does not necessarily finish the zero forcing steps once a branch has been processed. In this section power domination rules on the refined cut--tree are formally defined that are equivalent to power domination in the block graph using cut--vertices and clarify the arguments in \cite{xksz06}. 

In block graphs, a cut--vertex efficiently dominates the blocks in which it is contained, which leads to the following result.

\begin{lem}[\cite{xksz06}, Lemma 2.1]\label{cutvertex}
Let $G$ be a block graph. Then there exists a minimum power dominating set of $G$ in which every vertex is a cut--vertex of $G$.
\end{lem}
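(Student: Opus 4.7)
The plan is to start with an arbitrary minimum power dominating set $S$ of $G$ and iteratively replace each non-cut-vertex in $S$ by a cut-vertex, without changing the size of the set or destroying the PDS property. After finitely many such swaps, the resulting set is a minimum PDS consisting entirely of cut-vertices.

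The structural fact I will exploit is that every non-cut-vertex $v$ lies in exactly one block $BK$ of $G$, so $N[v] = V(BK)$ (because $BK$ is complete and $v$ has no neighbors outside $BK$). If $u$ is any cut-vertex in $BK$, then $u$ is adjacent to all of $BK$ as well as to vertices in the other blocks containing $u$, so $N[u] \supseteq N[v]$. I will also invoke the monotonicity principle for power domination: if $P \subseteq P'$ are sets of observed vertices at some stage, then applying one zero forcing step to each preserves the inclusion. Combined with $N[v] \subseteq N[u]$, this gives that replacing $v$ by $u$ in $S$ can only enlarge the set of observed vertices at every step of the process.

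With this in hand, the swap proceeds as follows. Let $v \in S$ be a non-cut-vertex lying in a block $BK$, and suppose first that $BK$ contains a cut-vertex $u$. If $u \notin S$, put $S' = (S \setminus \{v\}) \cup \{u\}$. The monotonicity argument shows $S'$ is a PDS of the same cardinality as $S$ with one fewer non-cut-vertex, so iterating this step finishes the proof. If instead $u \in S$, then the same monotonicity argument applied with $v$ simply deleted (and no replacement) shows that $S \setminus \{v\}$ is already a PDS, contradicting the minimality of $S$; hence this subcase cannot occur.

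The remaining case is when the block $BK$ containing $v$ has no cut-vertex at all. Connectivity of $G$ then forces $BK = G$, so $G$ is a single complete block; in this degenerate situation any single vertex is a minimum PDS, and the conclusion of the lemma is vacuous unless $G$ has at least one cut-vertex, which I will flag as a caveat. The main obstacle is really just the careful bookkeeping of the monotonicity check for the zero forcing step; once the inclusion $N[v] \subseteq N[u]$ is established, the rest of the argument is routine.
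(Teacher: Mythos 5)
The paper does not actually prove this lemma---it imports it from Xu et al.\ \cite{xksz06}---so there is no in-paper argument to compare against; your proof stands on its own, and it is essentially correct. The key facts you use are all sound: a non-cut-vertex $v$ lies in a unique block $BK$, hence $N[v]=V(BK)$ since $BK$ is complete and $v$ has no edges leaving $BK$; any cut-vertex $u$ of $BK$ satisfies $N[u]\supseteq N[v]$; and the observed set produced by the power domination process is monotone in the initial dominated set, so the swap $S'=(S\setminus\{v\})\cup\{u\}$ yields a power dominating set of the same size with strictly fewer non-cut-vertices, and the subcase $u\in S$ contradicts minimality. Two small remarks. First, state the monotonicity as a property of the closure (the final observed set) rather than of ``one zero forcing step,'' since a single step is not a deterministic operation; the closure version is the standard lemma and is what your argument actually needs. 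Second, the caveat you flag is real: if the block containing $v$ has no cut-vertex, then connectivity forces $G$ to be a single complete block, $G$ has no cut-vertices at all, and the lemma as transcribed is literally false (e.g.\ $G=K_n$ has $\gp{G}=1$ but no cut-vertex to place a PMU on). The cited source implicitly assumes $G$ has at least one cut-vertex (equivalently, at least two blocks), and your proof is complete under that hypothesis; it is worth making that assumption explicit rather than leaving it as a flagged gap.
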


This can be generalized to PMU-defect-robust power domination as follows.

\begin{cor}\label{cutvertexrobust}
For a block graph $G$, there exists a minimum $k$-rPDS of $G$ in which every vertex is a cut--vertex of $G$.
\end{cor}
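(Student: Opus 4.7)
The plan is to adapt the argument behind Lemma \ref{cutvertex} to the multiset setting. Starting from any $\gpkset$ $S$, I would iteratively reassign \emph{all} PMUs on a non-cut-vertex $v$ to a cut-vertex $x_j$ lying in the unique block $BK_i$ that contains $v$. The resulting multiset $S'$ has the same cardinality as $S$, and each step strictly decreases the number of non-cut-vertices in the underlying set, so the process terminates at a minimum $k$-rPDS supported entirely on cut-vertices. The only degenerate situation is when $G$ has no cut-vertex at all; then $G$ is a single complete block, so $\gp{G}=1$ and $\gpk{G}=k+1$ by Observation \ref{obs:comparing}(3), and the statement is vacuous.

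The geometric fact powering the local move is that, since $v$ is a non-cut-vertex lying in only one block, $N[v]=BK_i$, whereas $x_j\in BK_i$ is a cut-vertex so $N[x_j]\supseteq BK_i=N[v]$. From this I extract a single reusable monotonicity claim: if $U$ is a power dominating set of $G$, $v\in U$, and $N[v]\subseteq N[x_j]$, then $(U\setminus\{v\})\cup\{x_j\}$ is also a power dominating set. This follows because the domination step from the modified set observes a superset of what $U$ observes, and the zero forcing closure is monotone in its initial observed set.

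To show that $S'$ is still a $k$-rPDS, I would, for each submultiset $F'\subseteq S'$ of size $k$, construct a matching $F\subseteq S$ of size $k$ by removing from $x_j$ in $S$ as many PMUs as $F'$ removes from $x_j$ (up to the original multiplicity $\pmus{x_j}$), offloading any overflow onto $v$, and copying $F'$ elsewhere. A direct case check then shows that the underlying set of $S'\setminus F'$ is obtained from the underlying set of $S\setminus F$ by (possibly) substituting $x_j$ for $v$, after which the monotonicity claim yields the desired power dominating set.

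The main obstacle is this multiset bookkeeping. Because the multiplicity of $x_j$ in $S'$ is $\pmus{x_j}+\pmus{v}$, a submultiset $F'$ can try to delete more PMUs from $x_j$ than $S$ ever had there, so some care is needed to produce a legitimate $F\subseteq S$ and to verify in each subcase that the two resulting underlying sets differ only by the permitted replacement of $v$ by $x_j$. Once that translation is done cleanly, the single monotonicity claim closes the proof, and the iteration yields the claimed minimum $k$-rPDS supported on cut-vertices.
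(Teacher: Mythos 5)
Your proposal is correct, and it is worth noting that the paper offers no explicit proof of Corollary~\ref{cutvertexrobust} at all---it simply asserts that Lemma~\ref{cutvertex} ``can be generalized''---so your argument supplies exactly the details the authors leave implicit, and in the way they presumably intended: relocate the PMUs sitting on a non-cut-vertex $v$ to a cut-vertex $x_j$ of the unique block containing $v$, justify the move by $N[v]\subseteq N[x_j]$ together with monotonicity of the domination step and of the zero forcing closure, and handle the multiset side by matching each deletion $F'$ from $S'$ with a deletion $F$ from $S$ (sending the overflow at $x_j$ back to $v$), after which the three cases (both of $v,x_j$ survive in $S\setminus F$; only $v$ survives; neither survives) each reduce to the monotonicity claim. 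The bookkeeping you flag as the main obstacle does go through cleanly, and the termination argument (each move strictly shrinks the set of non-cut-vertices in the support) is fine. One small imprecision: the case where $G$ has no cut-vertex is not \emph{vacuous}---for a single complete block every $k$-rPDS is nonempty and consists of non-cut-vertices, so the statement literally fails there, just as Lemma~\ref{cutvertex} does for a complete graph; both results implicitly assume the block graph has at least two blocks, and it would be cleaner to say that than to call the case vacuous.
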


The refined cut--tree of $G$ captures the essential connections between the blocks of $G$, and the following definition explains how to translate power domination with cut--vertices in $G$ to the refined cut--tree.

\begin{defn}
Given a block graph $G$ and its refined cut--tree $T^B=(V^B,E^B)$, let $S \subseteq V^B$ be an initial set of cut--vertices. Define $CTS$ to be the set of observed vertices of $T^B$, and initially set $CTS=S$. Observe the vertices of $T^B$ according to the following observation rule:
\begin{enumerate}
\item Domination: For every $x_j \in S$, and for all $w_i^B \in N(x_j)$, $CTS=CTS \cup N[w_i^B]$.
\item Zero forcing through empty--blocks: If there exists an observed cut--vertex $x_j \in CTS$ such that an empty--block $w_i^B$ is the only unobserved neighbor of $x_j$, and $w_i^B$ has at most one unobserved neighbor, then $CTS = CTS \cup N[w_i^B]$.
\item Zero forcing through one--blocks: If there exists an observed cut--vertex $x_j$ such that a one--block $w_i^B$ is the only unobserved neighbor of $x_j$ and every  $v \in N(w_i^B)$ is observed, $CTS = CTS \cup \{w_i^B\}$.
\end{enumerate}
Repeat the zero forcing steps until no further changes are possible. If $CTS=V^B$ at the end of the process, the set $S$ is said to be a \emph{power dominating set for the cut--tree $T^B$}. 
\end{defn} 

Note that power domination on the refined--cut tree is not the same as power domination on a tree. For example, for the refined cut--tree in Figure \ref{fig:refinedcuttree}, a minimum power dominating set for the cut--tree is $\{x_1,x_2\}$ because the multi--block $w^B_1$ is only observed if its cut--vertex neighbor $x_1$ is in the power dominating set. However, a minimum power dominating set for the graph is $\{x_2\}$, which shows the difference between power domination and power domination for refined cut--trees.

\begin{lem}\label{PDblockcuttree}
Given a block graph $G=(V,E)$ and its corresponding refined cut--tree $T^B=(V^B,E^B)$, a set $S \subseteq V(G)$ in which every vertex is a cut--vertex of $G$ is a power dominating set of $G$ if and only if the corresponding set $S \subseteq V^B(T^B)$ is a power dominating set for the refined cut--tree $T^B$. 
\end{lem}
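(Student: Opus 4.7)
The plan is to establish a correspondence between the $G$-power-domination process and the $T^B$-power-domination process by maintaining the following invariant at every stage: (i) a cut-vertex $x_j$ is observed in $T^B$ if and only if $x_j$ is observed in $G$; and (ii) a block-vertex $w_i^B$ is observed in $T^B$ if and only if every vertex of $BK_i$ is observed in $G$. First I would verify this invariant after the initial domination step. Since every element of $S$ is a cut-vertex and each block containing some $x_j \in S$ is a clique, $N[S]$ in $G$ is precisely the union over $x_j \in S$ of the vertex sets of the blocks containing $x_j$; the domination rule in $T^B$ observes exactly the corresponding block-vertices $w_i^B$ together with all cut-vertices in those blocks, so the invariant is established initially.

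For the forward direction, I would analyze zero forcing in $G$ and prove the key structural claim that every forcing event is of one of two types: (A) a cut-vertex $x_j$ forcing another cut-vertex $x_k$ through an empty block $BK_i$, or (B) a cut-vertex $x_j$ forcing the unique non-cut-vertex $w$ through a one-block $BK_i$. The proof of this claim rules out forcing through multi-blocks and forcing by non-cut-vertices: in a multi-block $BK_i$ with at least two non-cut-vertices $v_1, v_2$, both $v_1$ and $v_2$ can only become observed via domination from a cut-vertex of $BK_i$ that lies in $S$, but any such dominator already observes every vertex of $BK_i$, so no vertex of $BK_i$ could then require forcing. A symmetric argument eliminates non-cut-vertex forcers, since a non-cut-vertex has all its neighbors in a single block and the same prerequisite analysis applies. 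With this structural claim, each type-(A) event matches Rule 2 (empty-block zero forcing) in the cut-tree and each type-(B) event matches Rule 3 (one-block zero forcing), so the invariant is preserved and termination in $G$ forces termination in $T^B$.

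For the reverse direction, I would show that each cut-tree rule application corresponds to a legal observation in $G$: Rule 1 (domination) corresponds to domination in $G$ because $x_j \in S$ dominates every vertex of every block containing $x_j$; Rule 2 corresponds to $x_j$ forcing the unique remaining unobserved cut-vertex of the empty block $BK_i$, where the invariant guarantees that all of $x_j$'s other neighbors in $G$ are already observed; and Rule 3 corresponds to $x_j$ forcing the lone non-cut-vertex of the one-block under the analogous hypotheses. If the $T^B$-process terminates with all of $V^B$ observed, then under the invariant every cut-vertex and every block of $G$ is fully observed, so all of $V(G)$ is observed. The main obstacle is the forward-direction structural claim that forcing is restricted to empty- and one-blocks; once this is in hand, both directions follow by a routine induction on the step at which each vertex or block-vertex becomes observed.
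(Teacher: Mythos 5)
Your overall strategy is the same as the paper's: establish the domination-step correspondence, prove the structural claim that zero forcing in $G$ can only be performed by cut--vertices and only into empty--blocks (cut--vertex target) or one--blocks (non-cut--vertex target), and then match forcing events to Rules 2 and 3. That structural claim and your argument for it (a non-cut--vertex of a block can only be observed by domination, since forcing it would require every other vertex of the clique to be observed first) are exactly the paper's key lemma, and your reverse direction matches the paper's as well.

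However, there is a concrete problem with your invariant (ii) as stated: it is \emph{not} established after the initial domination step. Take the path $z_1\,y_1\,x_1\,x_3\,y_2\,z_2$ with $S=\{y_1,y_2\}$. Every vertex of the block $\{x_1,x_3\}$ is observed in $G$ after the domination step ($x_1$ is dominated by $y_1$ through the block $\{y_1,x_1\}$, and $x_3$ by $y_2$ through $\{x_3,y_2\}$), but Rule 1 in $T^B$ only observes $N[w_i^B]$ for blocks $BK_i$ that actually contain a vertex of $S$, so the empty--block--vertex for $\{x_1,x_3\}$ is not yet in $CTS$; it is only picked up later by Rule 2. So the biconditional ``$w_i^B$ observed iff all of $BK_i$ observed in $G$'' fails at intermediate stages for empty blocks, and your claim that the invariant holds initially, which the rest of your induction leans on, is false. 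The repair is to use the vertex-level correspondence the paper uses: a cut--vertex corresponds to itself and a non-cut--vertex in $B_i$ corresponds to $w_i^B$, so that empty--block--vertices carry no $G$-side meaning; you then keep only the ``if $w_i^B$ is observed then $BK_i$ is observed'' direction and add a separate argument that, once the $G$-process terminates successfully, every remaining unobserved empty--block--vertex eventually satisfies the hypotheses of Rule 2 (all of its cut--vertex neighbors being observed and, working along the tree $T^B$, some neighbor having it as its unique unobserved neighbor). With that adjustment your proof goes through and coincides with the paper's.
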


\begin{proof}
Let $x \in S$ and note that $x$ is a cut-vertex of $G$. Recall that $w^B_i \in N_{T^B}(x)$ implies $x \in BK_i$ and $N_{T^B}[w^B_i]=\{v \in V(G):v \in BK_i\}$. In the refined cut--tree, $x$ observes $N_{T^B}[w^B_i]$ for all $w^B_i \in N_{T^B}(x)$.  In other words, $x$ observes the vertices in $T^B$ corresponding to the neighbors of $x$ in $G$. Therefore, a vertex is observed by the domination step for $S$ in $G$ if and only if its corresponding vertex is observed in the domination step for $S$ on $T^B$. To show that the power domination process is equivalent on the block graph and the refined cut--tree, it is sufficient to show that if a vertex is observed by a zero forcing step in $G$ if and only if its corresponding vertex is observed by a zero forcing step in $T^B$. 

It is necessary to first show that if the initial set $S$ contains only cut--vertices in $G$ and $v \in V(G)$ is not a cut--vertex, then $v$ cannot perform a zero forcing step. If $v$ is observed and $v \in BK_i$, then $v$ had to be observed by one of its neighbors $y \in BK_i$. If $y \in S$, all of $BK_i$ is observed and $v$ has no unobserved neighbors. If $y$ is not in $S$ and $v$ has an unobserved neighbor $z$, then $z$ is also adjacent to $y$ and $y$ cannot force $v$ while $z$ is not observed. So, $v$ cannot be observed and also have a unique unobserved neighbor. Therefore $v$ cannot perform a zero forcing step. Moreover,if $v$ not a cut--vertex and is the only unobserved neighbor of a cut--vertex $x$, then the block--vertex $w_k^B \in V(T^B)$ containing $v$ is a one--block. 

Next we consider how zero forcing on $G$ corresponds to zero forcing on $T^B$. Assume that $x \in V(G)$ is an observed cut--vertex and has exactly one unobserved neighbor $v$. Without loss of generality, let $BK_0$ be the block of $G$ such that $x,v \in BK_0$. Let $BK_i$, $1 \leq i \leq d$ be the other $d$ blocks containing $x$. By assumption, $BK_i$ is observed for all $1 \leq i \leq d$ and  $BK_0-v$ is observed. By the argument above, this implies $v$ is either a cut--vertex and all vertices in $BK_0$ are cut--vertices or $v$ is the only vertex in $BK_0$ which is not a cut vertex. Translating to the refined cut--tree, $x$ is observed and all the block vertex neighbors of $x$ are observed except $B_0$. If $v$ is a cut--vertex, $B_0$ is an empty--block with exactly one unobserved neighbor, $v$, and $x$ can observe $B_0$ and $v$. If $v$ is not a cut--vertex, $B_0=\{v\}$ and $B_0$ has all blue neighbors, so $x$ can observe $B_0$. 

Conversely, if $x \in V^B$ is a cut--vertex able to observe an empty--block, then $x$ is observed and has one unobserved empty--block--vertex neighbor $B_0$ such that $B_0$ has at most one unobserved cut--vertex neighbor. In the block graph, this implies that the associated block $BK_0$ has one unobserved vertex neighbor of $x$. Since the other blocks in $T^B$ that $x$ is an element of are observed and the power domination rules observe a block in $T^B$ if and only if all of the cut--vertex neighbors of the block are observed, $x$ has no unobserved neighbors in any other block--vertex neighbors. Therefore, in $G$, $x$ has at most one unobserved neighbor, and $x$ can force the cut--vertex corresponding to the unobserved neighbor of $B_0$ in $T^B$. If $x \in V^B$ is a cut--vertex able to observe a one--block, then $x$ is observed and its only unobserved neighbor is the one--block $B_1$. Then all of the cut--vertex neighbors of $B_1$ are observed, indicating that the one vertex in block $BK_1$ that is not observed is the one vertex $v$ that is not a cut--vertex. Since the other block--vertex neighbors of $x$ in $T^B$ are observed, all of the other block--vertex neighbors of $x$ in $G$ are observed, and $x$ can observe its one unobserved neighbor $v$ in $G$. 
\end{proof}

The same argument can be used to translate PMU-defect-robust power domination between a block graph and its corresponding refined cut--tree.

\begin{cor} Given a block graph $G$, a multiset $M \subseteq V(G)$ in which every vertex is a cut--vertex is a minimum $k$-rPDS of $G$ if and only if $M \subseteq V^B(T^B)$ is a minimum $k$-rPDS of the corresponding refined cut--tree $T^B(G)$. 
\end{cor}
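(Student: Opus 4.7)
My plan is to lift Lemma \ref{PDblockcuttree} from sets to multisets by applying it to the underlying set of each residue $M\setminus F$, and then promote the resulting equivalence of $k$-rPDSs to an equivalence of \emph{minimum} $k$-rPDSs via Corollary \ref{cutvertexrobust} together with the fact that the refined cut--tree power domination process is defined only for initial sets of cut--vertices. No new graph--theoretic content is needed; the non-trivial translation between $G$ and $T^B$ has already been performed at the single--set level in Lemma \ref{PDblockcuttree}.

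First I would fix a multiset $M$ of cut--vertices of $G$ and an arbitrary submultiset $F \subseteq M$ with $|F|=k$. The underlying set of $M\setminus F$ is still a subset of the cut--vertices of $G$, so Lemma \ref{PDblockcuttree} applies verbatim: this underlying set is a power dominating set of $G$ if and only if it is a power dominating set for the refined cut--tree $T^B$. By Definition \ref{def:gpk} (and its direct analogue for the refined cut--tree process), $M$ is a $k$-rPDS of $G$ exactly when every such $M\setminus F$ contains a power dominating set of $G$, and similarly for $T^B$. Quantifying over all $F$ with $|F|=k$ yields that $M$ is a $k$-rPDS of $G$ if and only if $M$ is a $k$-rPDS of $T^B$.

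To handle minimality, let $c_G$ (respectively $c_{T^B}$) denote the minimum cardinality of a $k$-rPDS of $G$ (respectively of $T^B$) whose underlying set consists of cut--vertices. The previous step shows the two families of such multisets coincide, so $c_G = c_{T^B}$. By Corollary \ref{cutvertexrobust}, $c_G = \gpk{G}$; and because the refined cut--tree power domination process admits only cut--vertices as initial PMU sites, we automatically have $c_{T^B} = \gpk{T^B}$. Hence the two minimum values agree, and $M$ attains one if and only if it attains the other.

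There is no real obstacle: the work is essentially bookkeeping on top of Lemma \ref{PDblockcuttree}. The one point I would be careful about is that when passing from $M$ to $M\setminus F$, the underlying set can shrink but remains a set of cut--vertices, so Lemma \ref{PDblockcuttree} is applicable uniformly across all choices of $F$; and that Corollary \ref{cutvertexrobust} is exactly what rules out a hypothetical smaller $k$-rPDS of $G$ containing non--cut--vertices that would otherwise break the correspondence.
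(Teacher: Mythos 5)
Your proposal is correct and takes essentially the same route as the paper, which offers no separate proof and simply asserts that ``the same argument'' as Lemma \ref{PDblockcuttree} carries over; applying that lemma to the underlying set of each residue $M\setminus F$ and invoking Corollary \ref{cutvertexrobust} for the minimality step is the natural formalization of that claim. Your version is in fact more explicit than the paper's about why restricting to cut--vertex--supported multisets does not change the minimum on the block graph side.
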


The refined cut--tree power domination rules allows a generalization of the procedure for finding a minimum power dominating set for trees in \cite{hhhh02} to find $\gpk{G}$ for a block graph $G$.

\begin{thm}
Given a block graph $G$, $\gpk{G}=(k+1)\gp{G}$.
\end{thm}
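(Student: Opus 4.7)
The upper bound $\gpk{G} \leq (k+1)\gp{G}$ is immediate from Proposition \ref{basebounds}, achieved by placing $k+1$ copies of a minimum power dominating set. For the matching lower bound, my plan is to exhibit $\gp{G}$ pairwise vertex-disjoint \emph{target sets} $D_1,\ldots,D_{\gp{G}}\subseteq V(G)$ with the property that every power dominating set of $G$ contains at least one vertex of each $D_i$. Once such sets are in hand, any $k$-rPDS $S$ must satisfy $\sum_{v\in D_i}\pmus{v}\geq k+1$ for each $i$: if some $D_i$ had $\sum_{v\in D_i}\pmus{v}\leq k$, then removing exactly the PMUs placed on $D_i$ would use at most $k$ removals but leave a multiset whose underlying vertex set is disjoint from $D_i$, containing no power dominating set and contradicting Observation \ref{obs:krPDScontainsessential}. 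By disjointness, $|S|\geq\sum_{i}\sum_{v\in D_i}\pmus{v}\geq(k+1)\gp{G}$.

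To construct the target sets I would induct on $\gp{G}$, working in the refined cut--tree $T^B$ and using Corollary \ref{cutvertexrobust} to restrict attention to $k$-rPDSs of cut--vertices. The base case $\gp{G}=1$ is handled by $D_1 = V(G)$. For $\gp{G}=d\geq 2$, I would locate a \emph{pendant piece} $H$ of $G$, attached to the remainder by a single separating element (an empty--block corresponding to an edge between two cut--vertices, or a single cut--vertex when no such edge is available), such that the remaining block graph $G'$ satisfies $V(H)\cap V(G')=\emptyset$ and $\gp{G'}=d-1$. Setting $D_1 = V(H)$ and applying the induction hypothesis to $G'$ yields the remaining disjoint target sets $D_2,\ldots,D_d\subseteq V(G')$. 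The pendant piece is found by processing $T^B$ from a leaf toward the centre, in the style of the Xu et al.\ algorithm in \cite{xksz06}, accumulating blocks until the pendant sub--block--graph contributes exactly one to the power domination number, then separating at the next empty--block or cut--vertex on the path; since $d\geq 2$, such a separator always exists before $T^B$ is exhausted.

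The main technical obstacle is verifying that every PDS of $G$ must intersect $V(H)$. This requires a case analysis, using the refined cut--tree power domination rules established earlier in this section, showing that no amount of observation from $V(G')$ can reach the interior of $H$ without a PMU placed in $V(H)$. The key cases are when the pendant subtree $T_H$ contains a terminal multi--block (whose non--cut--vertices can only be dominated from inside the block) and when $T_H$ consists of a chain of one--blocks (where zero--forcing from the separator stalls at the first branching point with more than one unobserved neighbor). Carefully assigning the separator to $V(G')$ rather than $V(H)$ so as to preserve disjointness of the target sets, and handling the edge case where no empty--block separator is available, constitutes the bulk of the technical work.
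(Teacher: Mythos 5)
Your reduction from the lower bound to the existence of $\gp{G}$ pairwise disjoint \emph{target sets} $D_1,\ldots,D_{\gp{G}}$, each met by every power dominating set, is a genuinely different strategy from the paper's (the paper instead takes a minimum $k$-rPDS of cut--vertices and repeatedly shifts all PMUs from a low-multiplicity vertex to its nearest cut--vertex ancestor in the rooted refined cut--tree, deriving a contradiction unless every vertex carries exactly $k+1$ PMUs). Your counting step is sound: if such disjoint sets exist, then any $k$-rPDS must place at least $k+1$ PMUs on each $D_i$ by Observation \ref{obs:krPDScontainsessential}, and disjointness gives $(k+1)\gp{G}$. If carried out, this would in fact prove something stronger and more reusable than the paper's argument (a dual certificate for $\gp{G}$ on block graphs). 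However, as written the proposal defers exactly the part that constitutes the theorem.

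Concretely, three things are asserted but not established. First, the existence of a pendant piece $H$ satisfying \emph{simultaneously} $\gp{G'}=d-1$ and ``every PDS of $G$ meets $V(H)$'' is the crux, and ``accumulating blocks until the pendant sub--block--graph contributes exactly one to the power domination number'' is not yet a definition: power domination is not additive over pendant pieces, and it is precisely the interaction across the separator (a cut--vertex observed from outside can still force one step into $H$) that must be controlled. Second, the induction silently requires that a PDS of $G$ restricted to $V(G')$ (possibly augmented by the separator) is a PDS of $G'$, so that the sets $D_2,\ldots,D_d$ produced for $G'$ are still hit by every PDS of $G$; this cut--vertex reduction lemma is plausible but is nowhere stated or proved, and without it the induction does not close. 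Third, the separator assignment is not merely bookkeeping: in a chain of three $K_4$'s sharing cut--vertices $x$ and $y$, the set $\{x,y\}$ is a PDS, so a target set for the first block must \emph{contain} the cut--vertex $x$, whereas in other configurations the separator must be excluded from $V(H)$ to keep $\gp{G'}=d-1$; a uniform rule covering both situations is needed and none is given. Until these three points are filled in, the proposal is an outline of a possible alternative proof rather than a proof.
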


\begin{proof}
Let $G$ be a block graph. By Proposition \ref{basebounds}, $\gpk{G} \leq (k+1)\gp{G}$. By way of contradiction, assume that $\gpk{G}<(k+1)\gp{G}$. By Corollary \ref{cutvertexrobust}, there exists a minimum $k$-rPDS of $G$ in which every vertex is a cut--vertex. Choose such a minimum $k$-rPDS, $S$, such that the number of vertices $x \in S$ with $\pmus{x} \leq k$ is minimum. Define $A=\{x \in S:\pmus{x} \leq k\}$. Choose a vertex $r \in A$ and create the refined cut--tree $T^B(G)$ rooted at $r$. 

Let $z$ be a vertex in $A$ at the maximum distance from the root of $T^B$, that is, $d(z, r)=\max\{d(x,r):x \in A\}$. Since $S$ is a $k$-rPDS multiset of $T^B$, all of the PMUs on $z$ may be removed and the resulting set is still a power dominating set. This implies that observing the descendants of $z$ needs at most one block--vertex child of $z$ to be observed by zero forcing from $z$. Create the multiset $S'$ in which all of the PMUs on $z$ are moved to its nearest cut--vertex ancestor $y$, $S'=(S \setminus \{z^{\pmus{z}}\}) \cup \{y^{\pmus{z}}\}$. Let $w_i^B$   be the block--vertex adjacent to both $z$ and $y$. By the domination rule of power domination on the refined cut--tree, a PMU on $y$ observes $N[w_i^B]$ and thus $z$ is observed and has at most one unobserved neighbor so $z$ is able to perform a zero forcing step. Therefore, $S'$ is a power dominating set of the refined cut--tree. If $\pmus{y}_{S'}>k+1$, the set $S'$ is not a minimum $k$-rPDS, contradicting the fact that $|S|=|S'|=\gpk{G}$. If $\pmus{y}=k+1$ in $S'$, then $S'$ is a minimum $k$-rPDS of $G$ with fewer vertices with $\pmus{x} \leq k$, contradicting the choice of $S$. If $\pmus{y}<k+1$, then the PMUs on $y$ are not needed to dominate its descendants and the PMUs may be moved up to the nearest cut--vertex ancestor of $y$ while still retaining the ability to power dominate the refined cut--tree. Continue moving the PMUs up to the nearest cut--vertex ancestor. If no contradiction is reached, all of the PMUs are on the root $r$ and $\pmus{r} \leq k$. However, this would imply that $\gpk{G} \leq k$, a contradiction.
\end{proof}

Every tree is a block graph, so the result for trees is immediate.

\begin{cor}
Given a tree $T$, $\gpk{T}=(k+1)\gp{T}$.
\end{cor}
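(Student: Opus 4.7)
The plan is to derive the corollary as an immediate specialization of the preceding theorem, once one verifies that every tree satisfies the paper's definition of a block graph. Recall that a block was defined as a maximal connected induced subgraph without a cut-vertex, and that a graph is a block graph when every such block is complete. In a tree $T$, acyclicity forces every block to be a single edge: any two adjacent vertices induce a $K_2$, and a third vertex adjacent to both would complete a triangle. Since $K_2$ is trivially complete, every block of $T$ is complete, so $T$ qualifies as a block graph.

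With this identification established, applying the preceding theorem to $G = T$ yields $\gpk{T} = (k+1)\gp{T}$ with no further work. The refined cut-tree machinery, the placement of PMUs on cut-vertices guaranteed by Corollary~\ref{cutvertexrobust}, and the root-ward PMU relocation argument used in the block graph proof all specialize transparently to the tree setting. There is no genuine obstacle here: the entire substance of the result is contained in the block graph theorem, and the corollary simply records its reduction to the familiar tree case.
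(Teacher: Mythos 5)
Your proof is correct and follows the paper's approach exactly: the paper likewise observes that every tree is a block graph and deduces the corollary immediately from the block graph theorem. Your added justification that each block of a tree is a $K_2$ (hence complete) is a harmless elaboration of what the paper leaves implicit.
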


\section{Future Work}\label{sec:futurework}

We have defined the $k$-PMU-defect-robust power domination number, determining an upper bound and exact results for some families of graphs. There are still interesting questions to explore.

First, as we see in Example~\ref{ex:gridgraphs}, the bound from Theorem~\ref{sjbound} is an improved upper bound for grid graphs, however, this bound is not tight for all $k$. What is $\gpk{G_{6,6}}$, and moreover, what is $\gpk{G_{n,n}}$ for other values of $n$? Moreover, are there any examples in which the bound from Theorem \ref{sjbound} is tight for infinitely many $k$, or can one show that Observation \ref{obs:sjboundnottight} implies that there is some sufficiently large $k^*$ for which the bound is never tight for $k > k^*$?

In Figure \ref{fig:sequalsnex}, we demonstrate graphs for which $\gp{G}=2$ and $\bigpds{G}=|V(G)|$. The first graph shown in Figure \ref{fig:sequalsnex} is a circulant graph---could this be shown for circulant graphs in general? Can we find any examples of this behavior for graphs with $\gp{G} > 2$? Lemma \ref{bigpdsNimplies} gives one characteristic of such a graph, but can we determine more such properties?

\section*{Acknowledgements}

The authors thank Johnathan Koch of Applied Research Solutions for his help in finding the graphs for Figure \ref{fig:sequalsnex}. This project was sponsored, in part, by the Air Force Research Laboratory via the Autonomy Technology Research Center and Wright State University. This research was also supported by Air Force Office of Scientific Research award 23RYCOR004.

\bibliographystyle{plain}
\bibliography{robust.bib}

\begin{thebibliography}{1}

\bibitem{beththesis}
Beth~Ann Bjorkman~Morrison.
\newblock Variations on the power domination problem: Hypergraphs and
  robustness, 2020.

\bibitem{bh05}
Dennis~J. Brueni and Lenwood~S. Heath.
\newblock The {PMU} placement problem.
\newblock {\em SIAM Journal on Discrete Mathematics}, 19(3):744--761, 2005.

\bibitem{dh06}
Michael Dorfling and Michael~A. Henning.
\newblock A note on power domination in grid graphs.
\newblock {\em Discrete Applied Mathematics}, 154(6):1023--1027, 2006.

\bibitem{hhhh02}
Teresa~W. Haynes, Sandra~M. Hedetniemi, Stephen~T. Hedetniemi, and Michael~A.
  Henning.
\newblock Domination in graphs applied to electric power networks.
\newblock {\em SIAM Journal on Discrete Mathematics}, 15(4):519--529, 2002.

\bibitem{pcw10}
Kung-Jui Pai, Jou-Ming Chang, and Yue-Li Wang.
\newblock Restricted power domination and fault-tolerant power domination on
  grids.
\newblock {\em Discrete Applied Mathematics}, 158(10):1079--1089, 2010.

\bibitem{xksz06}
Guangjun Xu, Liying Kang, Erfang Shan, and Min Zhao.
\newblock Power domination in block graphs.
\newblock {\em Theoretical computer science}, 359(1-3):299--305, 2006.

\bibitem{zkc06}
Min Zhao, Liying Kang, and Gerard~J. Chang.
\newblock Power domination in graphs.
\newblock {\em Discrete Mathematics}, 306(15):1812--1816, 2006.

\end{thebibliography}

\end{document}